\def\figurename{Figure} 
\renewcommand{\fnum@figure}[1]{\figurename~\thefigure.}
\def\tablename{Table} 
\renewcommand{\fnum@table}[1]{\tablename~\thetable.}
\newtheorem{theorem}{Theorem}[section]
\newtheorem{corollary}[theorem]{Corollary}
\newtheorem{proposition}[theorem]{Proposition}
\theoremstyle{definition}
\newtheorem{definition}[theorem]{Definition}
\theoremstyle{remark}
\newtheorem{remark}[theorem]{Remark}
\numberwithin{equation}{section}
\def\P{\mathbb P}
\def\R{\mathbb R}
\def\E{\mathbb E}
\def\F{\mathbb F}
\def\Q{\mathbb Q}
\def\E{\mathbb E}
\begin{document}

\title{Robust optimal problem for dynamic risk measures governed by BSDEs with jumps and delayed generator}

\author{Navegué Tuo\thanks{tnavegue@yahoo.fr}\; and Auguste Aman\thanks{augusteaman5@yahoo.fr, corresponding author}\\
UFR de Mathématiques et Informatique\\ Université Félix H. Boigny, Cocody\\
22 BP 582 Abidjan, Côte d'Ivoire}

\date{}
\maketitle \thispagestyle{empty} \setcounter{page}{1}

\thispagestyle{fancy} \fancyhead{}
 \fancyfoot{}
\renewcommand{\headrulewidth}{0pt}

\begin{abstract}
The aim of this paper is to study an optimal stopping problem for dynamic risk measures induced by backward stochastic differential equations with jumps and delayed generator. Firstly, we connect the value function of this problem to reflected BSDEs with jump and delayed generator. Furthermore, after establishing existence and uniqueness result for this  reflected BSDE, we use its to address through a mixed/optimal stopping game problem for the previous dynamic risk measure in ambiguity case.
\end{abstract}

\vspace{.08in}\textbf{MSC}:Primary: 60F05, 60H15, 47N10, 93E20; Secondary: 60J60\\ 
\vspace{.08in}\textbf{Keywords}: Backward stochastic differential equations; Delayed generators Reflected backward stochastic equations; Jump processes; Optimal stopping; Dynamic risk measures; Game problems.

\section{Introduction}
The risk measures start with the work of Artzner et al. \cite{Aal}. Later, there has been a lot of studies on risk measures. See e.g Follmer and Shied \cite{E6}, Frittelli and Gianin \cite{E8}, Bion-Nadal \cite{E16}, Barrieu and El Karoui \cite{E21}, Bayraktar E, I. Karatzas and Yao \cite{E18}. After these, around the year 2005, various authors established the links between continuous time dynamic risk measures and
backward differential equations. They have introduced dynamic risk measures in the Brownian case, defined as the solutions of BSDEs (see \cite{E8, E9,E21}). Clearly,
let consider $f$ and $\xi$ respectively a function and random variable.
The risk measure of the position $\xi$ denoted by $\rho_{t}(\xi)$ is described by the process $-X_{t}$ where $\{X(t),\; t\geq 0\}$ is the first component solution of BSDEs
associated to generator $f$  and terminal value $\xi$.
Many studies have been done on such risk measures, dealing with optimal
stopping problem and robust optimization problems (see for example \cite{10,E18,E21}). 

Recently, in \cite{D1}, Delong and Imkeller introduced the theory of nonlinear backward
stochastic differential equations (BSDEs, in short) with time
delayed
generators. Precisely, given a progressively measurable process $f$,
so-called generator and a square integrable random variable $\xi$, BSDEs with
time delayed generator are BSDEs of the form:   
\begin{eqnarray*}
X(t) = \xi+\int_{t}^{T}f(s,X_{s},Z_{s})ds-\int_{t}^{T}Z(s)dW(s), 0 \leq t \leq T,
\end{eqnarray*}
where the process $(X_{t},Z_{t})=(X(t+u),Z(t+u))_{-T \leq u \leq 0}$ represents all the past values of the solution until $t$. Under some assumptions, they
proved existence and uniqueness result of such a BSDEs. In this dynamic, the
same authors study, in an accompanying paper (see \cite{DI2}), BSDE with time
delayed generator driven both by a Brownian motion and a Poisson random
measure. Existence and uniqueness of a solution and its Malliavin's differentiability has been established. A few year later, in \cite{D2}, Delong proved that BSDEs with time delayed generator is a important tool to formulate many problems in mathematical finance and insurance.
For example, he proved that the dynamic of option based portfolio assurance is
the following time delayed BSDE:
\begin{eqnarray*}
X(t)=X(0)+(X(T)-X(0))^{+}-\int_t^TZ(s)dW(s).
\end{eqnarray*}
From these works, and given the importance of applications related to BSDEs
with time delayed generator, in your opinion, it is very judicious to expect to
study an optimal stopping problem for dynamic risk measures governed by
backward stochastic differential equations with delayed generator. Better, this
paper is dedicated to resolve an optimal stopping problem for dynamic
risk measure governed by backward stochastic differential equations driven with
both Brownian motion and Poison random measure. For more detail, let consider
$(\psi(t))_{t\geq 0}$ a given right continuous left limited adapted process and $\tau$ be a stopping time in $[0,T]$. Our
objective is to solve an optimal stopping problem related to risk measure of the
position $\psi(\tau)$ denoted by $\rho^{\psi,\tau}$ with dynamic follows as the process $-X^{\psi,\tau}$ where $(X^{\psi,\tau},Z^{\psi,\tau}, U^{\psi,\tau})$ satisfied the following BSDE
\begin{eqnarray*}
X^{\psi,\tau}(t)&=&\psi(\tau)+\int_t^Tf(s,Z^{\psi,\tau}_{s},U^{\psi,\tau}_{s}(.))ds-\int_t^TZ^{\psi,\tau}(s)dW(s)\\
&&-\int_t^T\int_{\R^* }U^{\psi,\tau}(s,z)\tilde{N}(ds,dz),\;\; 0\leq t\leq \tau,
\end{eqnarray*}
where $\R^*=\R\textbackslash \{0\}$. 

Roughly speaking, for all stopping time $\sigma$ with values on $[0,T]$, our aim is to minimize the risk measures at time $\sigma$ i.e we want to find a unique stopping time $\tau^{*}$ such that setting 
\begin{eqnarray*}
v(\sigma)=ess\inf_{\sigma\leq \tau\leq T}\rho^{\psi,\tau}(\sigma),
\end{eqnarray*}
we have
\begin{eqnarray}\label{ST}
v(\sigma)=\rho^{\psi,\tau^{*}}(\sigma). 
\end{eqnarray}
Our method is essentially based on the link establish between the value function $v$ and the first component of the solution of a reflected BSDEs with jump and delayed generator. Notion of reflected BSDEs has been introduced for the first time by N. EL Karoui et al. in \cite{E14} with a Brownian filtration. The solutions of such equations are constrained to be greater than given continuous processes called obstacles. Later, different extensions have been performed when we add the jumps process and/or suppose the obstacle not continuous. One can cite works of Tang and Li \cite{TL}, Hamadène and Ouknine \cite{E5,E7}, Essaky \cite{Essaky} and Quenez and Sulem \cite{10}. More recently, reflected BSDEs without jump and with delayed generator have been introduced respectively by Zhou and Ren \cite{ZR}, and Tuo et al. \cite{Tal}. 
Our study takes place in two stages. First, we provide an optimality criterium, that is a characterization of optimal stopping times and when the obstacle is right continuous and left limited (rcll, in short), we show the existence of an optimal stopping time. Thereafter, we address the optimal stopping problem when there is ambiguity on the risk measure. It means that there exists a given control $\delta$ that can influence the dynamic risk measures. More precisely, given the dynamic position $\psi$ this situation consists to focus on the robust optimal stopping problem for the family of risk measures $\{\rho^{\delta},\;\; \delta\in\mathcal{A}\}$ of this position $\psi$ induced by the BSDEs associated with generators $\{f^{\delta},\; \delta\in \mathcal{A}\}$. To this purpose and in view of the first part, we study the following optimal control problem related to $Y^{\delta}$ the first component solution of reflected BSDEs with jumps and delayed generator $f^{\delta},\;\;\delta \in A)$ with a RCLL obstacle $\psi$. In other words, we want to determine a stopping time $\tau^{*}$, which minimizes over all stopping times $\tau^{\delta}$, the risk of the position $\psi$. This is equivalent to derive a saddle points to a mixed control/optimal stopping game problem.

  The paper is organized as follows. We give the notation and formulation of the optimal problem for risk measures problem in Section 2. Existence and uniqueness results for RBSDEs with jumps and delayed generator with right continuous left limit (rcll) obstacle is provided in Section 3. In both section 4 and 5, we deal with the robust optimal stopping problem.

\section{Formulation of the problem}
Let consider a probability space $(\Omega,\mathcal{F},\P)$. For $E=\R^{d}\textbackslash \{0\}$ equipped with its Borel field $\mathcal{E}$, let $N$ be a Poisson random measure on $\R_{+}\times E$ with compensator $\nu(dt,dx)=\lambda(dx)dt$ where $\lambda $ is $\sigma$-finite measure on $(E,\mathcal{E})$ satisfying
\begin{eqnarray*}
\int_{E}(1\wedge |x|^2)d\lambda(x)<+\infty.
\end{eqnarray*}
such that $((N-\nu)([0,t]\times A))_{t\geq 0}$ is a martingale. Let also consider ac$d$-dimensional standard Brownian motion $(W_t)_{t\geq 0}$ independent of $N$. Let finally consider the filtration $\mathbb{F}={\mathcal{F}_t}_{t\geq t}$ defined by
\begin{eqnarray*}
\mathcal{F}_t=\mathcal{F}^{W}\wedge\mathcal{F}^{N}\wedge\mathcal{N},
\end{eqnarray*}
where $\mathcal{N}$ is the set of all $\P$-null element of $\mathcal{F}$.

\subsection{BSDEs with time delayed generators driven by Brownian motions and Poisson random measures}

This subsection is devoted to recall existence and uniqueness result for BSDEs with jump and time-delayed generator
\begin{eqnarray}\label{BSDEjump}
X(t)&=&\xi+\int_t^Tf(s,X_s,Z_{s},U_{s}(.))ds-\int_t^TZ(s)dW(s)\nonumber\\
&&-\int_t^T\int_{E}U(s,z)\tilde{N}(ds,dz),\;\; 0\leq t\leq T,
\end{eqnarray}
studied by Delong and Imkeller in \cite{DI2} and derive a comparison principle associated to this BSDE.
In this instance, let us describe following spaces of processes:
\begin{description}
\item $\bullet $ $ L_{-T}^2 (\mathbb{R}) $ denotes the space of measurable functions $ z : [-T,0]\rightarrow\mathbb{R}$ satisfying
$$ \int_{-T}^0 \mid z(v) \mid^2 dv<+\infty,
$$
\item $ \bullet $ $L_{-T}^{\infty} (\mathbb{R} )$ denotes the space of bounded, measurable functions $ y : [-T,0] \rightarrow \mathbb{R} $\\
satisfying
$$
\sup\limits_{v\in [-T,0]}\mid y(v) \mid^2 <+\infty,
$$
\item $L_{-T,m}^{2}(\R)$ denotes the space of product measurable functions $u:[-T,0]\times \R/\{0\}\rightarrow \R$                                                                                                such that
$$
\int_{-T}^{0}\int_{E}|u(t,z)|^{2}m(dz)dt<+\infty.
$$

\item $\bullet$ $ L^2 (\Omega, \mathcal{F}_T,\mathbb{R})$ is the Banach space of $\mathcal{F}_T$-measurable random variables $\xi: \Omega \rightarrow \mathbb{R} $ normed by $\displaystyle \|\xi\|_{L^2}=\left[\E(|\xi|^2)\right]^{1/2}$
\item $\bullet$  $\mathcal{H}^2(\R)$ denotes the Banach space of all predictable processes $\varphi$ with values in $\R$ such that $\E\left[\int_0^T|\varphi(s)|^2ds\right]<+\infty$.
\item $\bullet$ Let $\mathcal{H}_{m}^{2}(\R)$ denote the space of $\mathcal{P}\otimes \mathcal{E}$-mesurable processes $\phi$ satisfying $\E\left(\int_{0}^{T}\int_{E}|\phi(t,z)|^{2}m(dz)dt\right)<+\infty$, where $\mathcal{P}$ is the sigma algebra of $(\mathcal{F}_t)_{t\geq 0}$-predictable set on $\Omega\times[0,T]$.
\item $\bullet$ $\mathcal{S}^2(\R)$ denotes the Banach space of all $(\mathcal{F}_t)_{0\leq t\leq T}$-adapted right continuous left limit (rcll) processes $\eta$ with values in $\R$ such that  $\E\left(\sup_{0\leq s\leq T}|\eta(s)|^2\right)<+\infty$
\item $\bullet$ $\mathcal{K}^2(\R)$ denotes the Banach space of all $(\mathcal{F}_t)_{0\leq t\leq T}$-predictable right continuous left limit (rcll) increasing processes $\eta$ with values in $\R$ such that  $\eta(0)=0$ and $\E\left(|\eta(T)|^2\right)<+\infty$
\end{description}
The spaces $\mathcal{H}^2(\R),\, \mathcal{H}_{m}^{2}(\R)$ and $\mathcal{S}^2(\R)$ are respectively endowed with the norms
\begin{eqnarray*}
\|\varphi\|^2_{\mathcal{H}^2,\beta}&=&\E\left[\int_0^Te^{\beta s}|\varphi(s)|^2ds\right]\\
\|\phi(t,z)\|^2_{\beta,m}&=& \E\left(\int_{0}^{T}\int_{E}|\phi(t,z)|^{2}m(dz)dt\right)\\ 
\|\eta (s)\|^2_{\mathcal{S}^{2},\beta}&=&\E\left(\sup_{0\leq s\leq T}e^{\beta s}|\eta(s)|^2\right). 
\end{eqnarray*}

Our two results has been done under the following hypotheses: For a fix $T>0$,
\begin{description}
\item[$({\bf A1})$] $\tau$ is a finite $(\mathcal{F}_t)_{0\leq t\leq T}$-stopping time.
\item[$({\bf A2})$] $\xi \in L^{2}(\mathcal{F}_{\tau},\R)$
\item[$({\bf A3})$] $f:\Omega \times [0,T]\times L_{-T}^{\infty}(\R) \times L_{-T}^{2}(\R) \times L_{-T,m}^{2}(\R) \rightarrow \R$ is a product measurable, $\F$-adapted function satisfying
\begin{itemize}
	\item [$(i)$] There exists a probability measure $\alpha$ on $([-T,0],\mathcal{B}([-T,0]))$ and a positive constant $K$, such that
\begin{eqnarray*}
&&|f(t,y_t,z_t,u_t(.)- f(t,\bar{y}_t,\bar{z}_t,\bar{u}_t(.)|^{2}\\
&\leq K &\int_{-T}^{0}\left[|y(t+v)-\bar{y}(t+v)|^{2}+|z(t+v)-\bar{z}(t+v)|^{2}\right.\\
&&\left.+\int_{E}|u(t+v,\zeta) - \bar{u}(t+v,\zeta)|^{2}m(d\zeta)\right]\alpha(dv)
\end{eqnarray*}
for $\mathbb{P}\otimes\lambda$ a.e, $(\omega,t) \in \Omega \times [0,T]$, for any $(x_t,z_t,u_t(.))$, $(\bar{x}_t,\bar{z}_t,\bar{u}_t(.))\in L_{-T}^{\infty}(\R) \times L_{-T}^{2}(\R) \times L_{-T,m}^{2}(\R)$ 

\item [$(ii)$] $\displaystyle \E\left(\int_{0}^{T}|f(t,0,0,0)|^{2}dt \right)<+\infty$
\item [$(iii)$] $f(t,.,.,.)= 0$ a.s, for $t<0$
\end{itemize}
\end{description}
For the sake of good understanding, we give in the following the notion of solution of BSDE \eqref{BSDEjump}.
\begin{definition}
 The triple processes $(X,Z,U)$ is called solution of BSDE \eqref{BSDEjump} if $(X,Z,U)$ belongs in $\mathcal{S}^2(\R)\times \mathcal{H}^2(\R)\times \mathcal{H}^2_{m}(\R)$ and satisfies \eqref{BSDEjump}.
\end{definition}
We recall the existence and uniqueness result established in \cite{DI2}.
\begin{theorem}
Assume that $({\bf A1})$-$({\bf A3})$ hold. If $T$ a terminal time or $K$ a Lipschitz constant are sufficiently small i.e
\begin{eqnarray*}
9TKe\max(1,T)<1,
\end{eqnarray*}
\eqref{BSDEjump} has a unique solution.
\end{theorem}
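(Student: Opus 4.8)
The plan is to prove existence and uniqueness by a Banach fixed-point argument on the space $\mathcal{B}=\mathcal{H}^2(\R)\times\mathcal{H}^2(\R)\times\mathcal{H}^2_m(\R)$, endowed with the $\beta$-weighted norm $\|(X,Z,U)\|^2_\beta=\E\int_0^T e^{\beta s}(|X(s)|^2+|Z(s)|^2)\,ds+\E\int_0^T\int_E e^{\beta s}|U(s,z)|^2 m(dz)\,ds$ for a constant $\beta>0$ to be fixed later. First I would define the solution map $\Phi$: given an input triple $(X,Z,U)\in\mathcal{B}$, the generator frozen along it, $g(s):=f(s,X_s,Z_s,U_s(\cdot))$, no longer depends on the unknown, and by $({\bf A3})(i)$--$(ii)$ it lies in $\mathcal{H}^2(\R)$. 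I would then set $\bar X(t)=\E[\xi+\int_t^T g(s)\,ds\mid\mathcal{F}_t]$ and apply the martingale representation theorem for the filtration generated by $W$ and $N$ to the square-integrable martingale $t\mapsto\E[\xi+\int_0^T g(s)\,ds\mid\mathcal{F}_t]$, producing predictable processes $\bar Z\in\mathcal{H}^2(\R)$ and $\bar U\in\mathcal{H}^2_m(\R)$. The output $\Phi(X,Z,U):=(\bar X,\bar Z,\bar U)$ then solves \eqref{BSDEjump} with the generator frozen at the input, and a routine check using Doob's inequality and the It\^o isometry shows $\bar X\in\mathcal{S}^2(\R)$, so that $\Phi$ maps $\mathcal{B}$ into itself.

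The core is the contraction estimate. Taking two inputs together with their images, writing $\delta X=X-X'$, $\Delta X=\bar X-\bar X'$ (and analogously for the other components), I would apply It\^o's formula to $e^{\beta t}|\Delta X(t)|^2$ on $[t,T]$, take expectations, and use the terminal condition $\Delta X(T)=0$ to obtain
\begin{align*}
&\E\int_0^T e^{\beta s}\Big(\beta|\Delta X|^2+|\Delta Z|^2+\textstyle\int_E|\Delta U|^2 m(dz)\Big)\,ds\\
&\qquad\le 2\,\E\int_0^T e^{\beta s}\,|\Delta X(s)|\,|g(s)-g'(s)|\,ds,
\end{align*}
where $g,g'$ are the frozen generators associated to the two inputs. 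Young's inequality followed by the delayed Lipschitz condition $({\bf A3})(i)$ then bounds the right-hand side by a constant multiple of $\E\int_0^T e^{\beta s}\int_{-T}^0[|\delta X(s+v)|^2+|\delta Z(s+v)|^2+\int_E|\delta U(s+v,\zeta)|^2 m(d\zeta)]\,\alpha(dv)\,ds$.

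The decisive manipulation, and the step I expect to be the main obstacle, is controlling the shifted path integral. Using Fubini and the change of variable $r=s+v$, together with the convention that the components of $\delta X,\delta Z,\delta U$ vanish on $[-T,0)$ (the two solutions share the same past) and that $\alpha$ is a probability measure, I would estimate
\begin{align*}
\int_0^T e^{\beta s}\!\int_{-T}^0|\delta X(s+v)|^2\,\alpha(dv)\,ds
&=\int_{-T}^0\Big(\int_v^{T+v}e^{\beta(r-v)}|\delta X(r)|^2\,dr\Big)\alpha(dv)\\
&\le e^{\beta T}\int_0^T e^{\beta r}|\delta X(r)|^2\,dr,
\end{align*}
and similarly for the $Z$ and $U$ terms. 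This produces the factor $e^{\beta T}$ and reveals the essential structural feature of the delayed setting: unlike in the non-delayed case, enlarging $\beta$ cannot absorb the Lipschitz constant, since it simultaneously inflates $e^{\beta T}$. This is precisely why a smallness hypothesis on $T$ or $K$ is unavoidable.

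Collecting the constants arising from Young's inequality, from the three components $X,Z,U$, and from the shift estimate, and choosing $\beta$ so that $e^{\beta T}$ is controlled by $e$ (with $\max(1,T)$ absorbing both the regimes $T\le 1$ and $T>1$), I would arrive at an inequality of the form
\begin{align*}
\|\Phi(X,Z,U)-\Phi(X',Z',U')\|^2_\beta\le 9\,TKe\max(1,T)\,\|(\delta X,\delta Z,\delta U)\|^2_\beta.
\end{align*}
Under the hypothesis $9TKe\max(1,T)<1$, the map $\Phi$ is therefore a strict contraction on the Banach space $\mathcal{B}$, so by the Banach fixed-point theorem it admits a unique fixed point $(X,Z,U)$. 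By construction this fixed point satisfies \eqref{BSDEjump}, and the verification carried out when showing $\Phi(\mathcal{B})\subset\mathcal{B}$ guarantees $X\in\mathcal{S}^2(\R)$; hence \eqref{BSDEjump} has a unique solution, which completes the argument.
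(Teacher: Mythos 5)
Your overall strategy --- freezing the generator along the input, solving the resulting non-delayed equation by martingale representation, and running a Banach fixed-point argument in an exponentially weighted norm with $\beta=1/T$ --- is exactly the route of \cite{DI2}, to which the paper defers for this theorem (it states the result without proof), and it also mirrors the paper's own proof of the reflected analogue, Theorem~\ref{Theo 2.2}. However, there is a genuine gap in your contraction estimate, and it sits precisely at the step you yourself single out as decisive. You assert that $\delta X$, $\delta Z$, $\delta U$ vanish on $[-T,0)$ because ``the two solutions share the same past''. For $Z$ and $U$ this is the standard convention, but for the first component the convention used both in \cite{DI2} and in this paper (see the proof of Theorem~\ref{Theo 2.2}: ``$Z(t)=0$, $U(t,\cdot)\equiv 0$ and $Y(t)=Y(0)$, for $t<0$'') is $X(t)=X(0)$ for $t<0$. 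Hence $\delta X(r)=\delta X(0)$ on $[-T,0)$, which is not zero for two distinct inputs; adopting extension by zero instead would mean solving a different equation from \eqref{BSDEjump} as interpreted by the paper. Under the correct convention your change-of-variable bound acquires the extra term
\begin{equation*}
\int_{-T}^{0}\int_{v}^{0} e^{\beta(r-v)}\,|\delta X(0)|^2\,dr\,\alpha(dv)\;\le\;T\,e^{\beta T}\,|\delta X(0)|^2,
\end{equation*}
and $\E\bigl[|\delta X(0)|^2\bigr]$ cannot be dominated by the $\mathcal{H}^2$-norm $\E\int_0^T e^{\beta r}|\delta X(r)|^2\,dr$ that you placed on the first component: an $L^2$-in-time norm gives no pointwise control. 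So $\Phi$ is not shown to be a contraction on your space $\mathcal{B}$.

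The repair is the one used in \cite{DI2} and in the paper's Section 3: take the first component in $\mathcal{S}^2(\R)$ with the norm $\E\bigl(\sup_{0\le t\le T}e^{\beta t}|X(t)|^2\bigr)$, control the supremum of the image by taking conditional expectations in the It\^o inequality and applying Doob's maximal inequality, and then bound the offending term by $T\,e^{\beta T}\,\E\bigl(\sup_{0\le t\le T}e^{\beta t}|\delta X(t)|^2\bigr)$. This is exactly where the factor $\max(1,T)$ in the smallness condition comes from: a factor $T$ for the $X$-part of the shifted integral against a factor $1$ for the $Z$- and $U$-parts. A second, related reason the first component must live in $\mathcal{S}^2(\R)$ rather than $\mathcal{H}^2(\R)$ is that assumption $({\bf A3})$ requires the first path argument of $f$ to lie in $L^{\infty}_{-T}(\R)$, and an element of $\mathcal{H}^2(\R)$ need not have bounded paths, so your map $\Phi$ is not even obviously well defined on all of $\mathcal{B}$. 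Once the space is corrected, the rest of your argument (Young's inequality, Fubini and the change of variable, the choice $\beta=1/T$, and a contraction constant bounded by $9TKe\max(1,T)$) goes through as in the cited proof.
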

The concept of comparison principle is a very important in the theory of BSDE without delay. Unfortunately, as point out by Example 5.1 in \cite{D1}, this principle cannot be extended in general form to BSDEs with delayed generators. Nevertheless, according to Theorem 3.5 appear in \cite{Tal}, the comparison principle for  BSDEs without jump and with delayed generator, still hold on stochastic intervals in where the strategy process $Z$ stays away from $0$. The following theorem is an extension to BSDEs with jump and delayed generator. To do it, we need this additional assumption
\begin{description}
\item ({\bf A4 }) $f:\Omega \times [0,T]\times L_{-T}^{\infty}(\R) \times L_{-T}^{2}(\R) \times L_{-T,m}^{2}(\R) \rightarrow \R$ is a product measurable, $\F$-adapted function satisfying:\begin{eqnarray*}
f(t,x_t,z_t,u_t(.))-f(t,x_t,z_t,u'_t(.))\geq \int_{-T}^{0}\langle \theta^{x_t,z_t,u_t(.),u'_t(.)},u(t+v,.)-u'(t+v,.)\rangle_m\alpha(dv),
\end{eqnarray*}
 for $\mathbb{P}\otimes\lambda$ a.e, $(\omega,t) \in \Omega \times [0,T]$ and each $(x_t,z_t,u_t(.),u'_t(.))\in L_{-T}^{\infty}(\R) \times L_{-T}^{2}(\R) \times L_{-T,m}^{2}(\R)\times L_{-T,m}^{2}(\R)$, where $ \theta:\Omega\times [0,T]\times L_{-T}^{\infty}(\R) \times L_{-T}^{2}(\R) \times L_{-T,m}^{2}(\R)\times L_{-T,m}^{2}(\R)\rightarrow L_{-T,m}^{2}(\R)$ is 
 a measurable an bounded function such that there exists $\varphi$ belongs to $L_{-T,m}^{2}(\R)$, verifying
 \begin{eqnarray*}
\theta^{x_t,z_t,u_t(.),u'_t(.)}(\zeta)\geq -1\;\;\;\;\mbox{and}\;\;\;\; |\theta^{x_t,z_t,u_t(.),u'_t(.)}(\zeta)|\leq \varphi(\zeta).
\end{eqnarray*}

\end{description}
\begin{theorem}
Consider BSDE \eqref{BSDEjump} associated to delayed generators $f_1$, $f_2$ and corresponding terminal values $\xi^1$, $\xi^2$ at terminal time $\tau$ satisfying the assumptions $({\bf A1})$-$({\bf A3})$. Let $(X^{\tau,1}, Z^{\tau,1},U^{\tau,1})$ and $(X^{\tau,2}, Z^{\tau,2},U^{\tau,2})$ denote respectively the associated unique solutions. Let consider the sequence of stopping time $(\sigma_n)_{n\geq 1}$ define by  
\begin{eqnarray}
\sigma_n&=&\inf\left\{
\begin{array}{ll}
t\geq 0 &,\displaystyle |X^{\tau,1}(t)-X^{\tau, 2}(t)|\vee|Z^{\tau,1}(t)-Z^{\tau,2}(t)|\vee \int_{E}|U^{\tau,1}(t,z)-U^{\tau,2}(t,z)|m(dz) \leq \frac{1}{n}\\ \mbox{or}\\
&\displaystyle |X^{\tau,1}(t)-X^{\tau, 2}(t)|\vee |Z^{\tau,1}(t)-Z^{\tau,2}(t)|\vee \int_{E}|U^{\tau,1}(t,z)-U^{\tau,2}(t,z)|m(dz)\geq n.
\end{array}
\right\}\nonumber
\\ &\wedge & T\label{TA}
\end{eqnarray}
and set 
\begin{eqnarray}\label{TAbis}
\sigma=\sup_{n\geq 1}\sigma_n.
\end{eqnarray}
Moreover we suppose that
\begin{itemize}
\item $X^{\tau,1}(\sigma)\geq X^{\tau,2}(\sigma)$
\item $f_1(t, X^{\tau,1}_t,Z^{\tau,1}_t,U^{\tau,1}_t(.))\geq f_2(t,X^{\tau,1}_t,Z^{\tau,1}_t,U^{\tau,1}_t(.))$ or
\item $f_1(t,X^{\tau,2}_t,Z^{\tau,2}_t,U^{\tau,2}_t(.))\geq f_2(t,X^{\tau,2}_t,Z^{\tau,2}_t,U^{\tau,2}_t(.))$. 
\end{itemize}
Then $X^{\tau,1}(t)\geq X^{\tau,2}(t),\; \P$-a.s. for all $t\in [0,\sigma]$. 
\end{theorem}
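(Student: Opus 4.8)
The plan is to show that the negative part of the difference process vanishes identically on the stochastic interval $[0,\sigma]$. Write $\bar X=X^{\tau,1}-X^{\tau,2}$, $\bar Z=Z^{\tau,1}-Z^{\tau,2}$, $\bar U=U^{\tau,1}-U^{\tau,2}$ and set $\Theta^i_s=(X^{\tau,i}_s,Z^{\tau,i}_s,U^{\tau,i}_s(\cdot))$. Subtracting the two copies of \eqref{BSDEjump}, the triple $(\bar X,\bar Z,\bar U)$ solves, on $[0,\sigma]$, a BSDE with terminal datum $\bar X(\sigma)\ge 0$ (hence $\bar X(\sigma)^-=0$) and driver $f_1(s,\Theta^1_s)-f_2(s,\Theta^2_s)$. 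Under the second dominance hypothesis I would split
\[
f_1(s,\Theta^1_s)-f_2(s,\Theta^2_s)=\bigl(f_1(s,\Theta^1_s)-f_1(s,\Theta^2_s)\bigr)+\bigl(f_1(s,\Theta^2_s)-f_2(s,\Theta^2_s)\bigr),
\]
the second bracket being nonnegative by assumption; the first alternative is treated symmetrically by linearising $f_2$ about $\Theta^1$.

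Next I would linearise the first bracket. Using the delayed Lipschitz condition (A3)(i) for the $\bar X$- and $\bar Z$-directions and assumption (A4) for the jump direction, I would write
\[
f_1(s,\Theta^1_s)-f_1(s,\Theta^2_s)=\int_{-T}^0\Bigl(a(s,v)\,\bar X(s+v)+b(s,v)\,\bar Z(s+v)+\langle\theta(s,v,\cdot),\bar U(s+v,\cdot)\rangle_m\Bigr)\alpha(dv),
\]
with $a,b$ bounded and, thanks to (A4), a jump coefficient obeying $\theta\ge -1$ and $|\theta|\le\varphi$. The purpose of the localisation defining $\sigma_n$ in \eqref{TA} is precisely that on $[0,\sigma_n]$ the three differences stay between the levels $1/n$ and $n$; in particular $\bar Z$ is bounded away from $0$, which is what lets one manufacture bounded linearisation coefficients $a,b$ out of the merely quadratic estimate (A3)(i). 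This is the jump analogue of the restriction, in Theorem 3.5 of \cite{Tal}, to intervals where the strategy process stays away from $0$.

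With this linear delayed representation in hand, the favourable sign of the jump part is exactly what the lower bound $\theta\ge -1$ of (A4) guarantees: it keeps the associated Dol\'eans--Dade exponential weight nonnegative, so that, absent the delay, $\bar X$ would be a nonnegative-weighted conditional expectation of the nonnegative terminal value and of the nonnegative source $f_1(\Theta^2)-f_2(\Theta^2)$. Concretely I would apply the Tanaka--Meyer formula to $e^{\beta s}(\bar X(s)^-)^2$ on $[t,\sigma]$, using that $x\mapsto(x^-)^2$ is convex (so the second-order jump corrections carry the good sign), that $-2\,\bar X^-\bigl(f_1(\Theta^2)-f_2(\Theta^2)\bigr)\le 0$, and that $\theta\ge -1$ keeps the remaining jump contribution on the favourable side after completion of squares. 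Bounding the leftover driver term by (A3)(i) produces an $\alpha$-average of the shifted differences $\bar X(s+v),\bar Z(s+v),\bar U(s+v)$ weighted against $\bar X(s)^-$.

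The main obstacle is this last point. The delayed Lipschitz bound controls $f_1(\Theta^1)-f_1(\Theta^2)$ by the \emph{full} differences at the shifted times $s+v$, not by the negative part $\bar X(s)^-$ at the current time, so the pointwise sign cancellation underlying the classical comparison theorem is unavailable; this is the failure exhibited by Example 5.1 of \cite{D1}. To close the argument I would invoke the smallness condition $9TKe\max(1,T)<1$: after taking expectations and choosing $\beta$ suitably, the delayed feedback becomes a contraction, so a Gronwall-type estimate on the localised interval forces $\E\int_0^{\sigma_n}e^{\beta s}(\bar X(s)^-)^2\,ds=0$, whence $\bar X^-\equiv 0$ and $X^{\tau,1}\ge X^{\tau,2}$ on $[0,\sigma_n]$. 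Letting $n\to\infty$ and using $\sigma=\sup_{n\ge 1}\sigma_n$ from \eqref{TAbis} propagates the inequality to all of $[0,\sigma]$. The delicate bookkeeping is to check that the localising times render every linearisation coefficient bounded and every stochastic integral a genuine martingale, so that the expectations taken above are legitimate.
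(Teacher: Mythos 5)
There is a genuine gap, and it sits exactly at the step you flag as ``the main obstacle.'' After applying Tanaka--Meyer to $(\bar X^-)^2$ and Young's inequality, the estimate you obtain has the form
\begin{equation*}
\E\bigl[(\bar X(t)^-)^2\bigr]\;\leq\;\beta\,\E\!\int_t^{\sigma}(\bar X(s)^-)^2ds\;+\;\frac{K}{\beta}\,\E\!\int_t^{\sigma}\!\int_{-T}^{0}\Bigl(|\bar X(s+v)|^2+|\bar Z(s+v)|^2+\int_E|\bar U(s+v,\zeta)|^2m(d\zeta)\Bigr)\alpha(dv)\,ds,
\end{equation*}
i.e.\ the negative part on the left is dominated by the \emph{full} differences on the right. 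These are two different quantities, so there is no contraction to speak of: smallness of $K$ (the condition $9TKe\max(1,T)<1$) makes the second term small relative to the size of $(\bar X,\bar Z,\bar U)$, but never zero, and the full differences are generically nonzero --- indeed they \emph{must} be nonzero whenever the conclusion $X^{\tau,1}\geq X^{\tau,2}$ holds strictly, e.g.\ when $\xi^1>\xi^2$. Gronwall therefore yields only a finite bound on $\E[(\bar X(t)^-)^2]$, not vanishing. A further warning sign that the step cannot be repaired as stated: your Gronwall computation uses only the global quadratic bound (A3)(i) and the sign of the terminal value, not the two-sided bounds $1/n<\cdot<n$ defining $\sigma_n$; an argument of that shape would prove comparison on essentially arbitrary intervals, contradicting the counterexample (Example 5.1 of \cite{D1}) that the paper itself cites as the reason comparison fails for delayed generators. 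Separately, your ``exact'' linear representation of $f_1(s,\Theta^1_s)-f_1(s,\Theta^2_s)$ as an $\alpha$-integral of terms linear in the \emph{shifted} differences is not justified: (A3)(i) is a quadratic estimate, not a mean-value representation, and in any case you abandon that representation when you fall back on (A3)(i) in the Tanaka step.

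The paper's proof avoids all of this by linearising at the \emph{current} time rather than at the shifted times: it defines finite-difference quotients such as $\delta(t)=\bigl[f^1(t,X^{\tau,1}_t,Z^{\tau,1}_t,U^{\tau,1}_t)-f^1(t,X^{\tau,2}_t,Z^{\tau,1}_t,U^{\tau,1}_t)\bigr]/\Delta X^{\tau}(t)$ (and analogously $\beta(t)$, $\gamma(t)$ for the $Z$- and $U$-directions), so that the driver difference equals $\delta(t)\Delta X^{\tau}(t)+\beta(t)\Delta Z^{\tau}(t)+\gamma(t)\int_E\Delta U^{\tau}(t,z)m(dz)+\Delta f(t,\Theta^2_t)$ \emph{exactly}, with the whole delay absorbed into the coefficients. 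This is where the localisation \eqref{TA}--\eqref{TAbis} earns its keep: by (A3)(i), $|\delta(t)|^2\leq K\int_{-T}^0|\Delta X^{\tau}(t+u)|^2/|\Delta X^{\tau}(t)|^2\,\alpha(du)$, and on $[0,\sigma]$ these ratios of shifted to current differences are bounded, so $\delta,\beta,\gamma$ are bounded processes. The equation for $\Delta X^{\tau}$ on $[0,\sigma]$ is then a genuinely linear, \emph{non-delayed} BSDE with bounded coefficients, and the classical machinery applies verbatim: an integrating factor $R$ removes the $\delta\,\Delta X^{\tau}$ term, Girsanov's theorem (this is where (A4), $\theta\geq-1$, enters, keeping the Dol\'eans--Dade exponential admissible) removes the $\beta\,\Delta Z^{\tau}$ and $\gamma\,\Delta U^{\tau}$ terms, and taking conditional expectation under the new measure represents $R(t)\Delta X^{\tau}(t)$ as a conditional expectation of the nonnegative terms $R(\sigma)\Delta X^{\tau}(\sigma)$ and $\int R\,\Delta f$. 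No Gronwall argument, no use of the smallness condition, and no estimate involving shifted full differences is ever needed. If you want to salvage your Tanaka--Meyer route, you would have to run it with the paper's current-time quotients $\delta,\beta,\gamma$ instead of your delayed coefficients; with those, the cross terms $\bar X(s)^-\delta(s)\bar X(s)=-\delta(s)(\bar X(s)^-)^2$ and $\bar X(s)^-\beta(s)\bar Z(s)\mathbf{1}_{\{\bar X(s)<0\}}$ can be absorbed in the standard way, and the argument closes without any smallness assumption.
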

\begin{proof}
We follow the ideas from  Theorem 5.1 for BSDEs without jumps and with
delayed generator established in \cite{D1}. For each $t\in [0,T]$ let 
\begin{eqnarray*}
\Delta X^{\tau}(t)=X^{\tau,1}(t)-X^{\tau,2}(t),\;\Delta Z(t)= Z^{\tau,1}(t)- Z^{\tau,2}(t),\;\;\Delta U^{\tau}(t,.)= U^{\tau,1}(t,.)- U^{\tau,2}(t,.),\\\\ \Delta f(t,X^{\tau,2}_{t},Z^{\tau,2}_{t},U^{\tau,2}_{t}(.)) =  f^{1}(t,X^{\tau,2}_{t},Z^{\tau,2}_{t},U^{\tau,2}_{t}(.))-f^{2}(t,X^{\tau,2}_{t},Z^{\tau,2}_{t},U^{\tau,2}_{t}(.)).
\end{eqnarray*}
Let consider the real processes $\delta, \beta$ and $\gamma$ defined respectively by  
\begin{eqnarray*}
\delta(t) =\left\{
\begin{array}{lll}
\frac{f^{1}(t,X^{\tau,1}_{t},Z^{\tau,1}_{t},U^{\tau,1}_{t}(.))- f^{1}(t,X^{\tau,2}_{t},Z^{\tau,1}_{t},U^{\tau,1}_{t}(.))}{\Delta X^{\tau}(t)}&\mbox{if}& \Delta X^{\tau}(t)\neq 0\\
0 & & otherwise,
\end{array}
\right.
\end{eqnarray*}

\begin{eqnarray*}
\beta(t)=
\left\{
\begin{array}{lll}
\frac{f^{1}(t,X^{\tau,2}_{t},Z^{\tau,1}_{t},U^{\tau,1}_{t}(.))- f^{1} (t,X^{\tau,2}_{t},Z^{\tau,2}_{t},U^{\tau,1}_t(.))}{\Delta Z^{\tau}(t)}&\mbox{if}&\Delta Z^{\tau}(t)\neq 0\\
 0 & & otherwise.		
\end{array}
\right.
\end{eqnarray*}
and 
\begin{eqnarray*}
\gamma(t)=
\left\{
\begin{array}{lll}
\frac{f^{1}(t,X^{\tau,2}_{t},Z^{\tau,2}_{t},U^{\tau,1}_{t}(.))- f^{1} (t,X^{\tau,2}_{t},Z^{\tau,2}_{t},U^{\tau,2}_t(.))}{\int_{E}\Delta U^{\tau}(t,z)m(dz)}&\mbox{if}&\int_{E}\Delta U^{\tau}(t,z)m(dz)\neq 0\\
 0 & & otherwise.		
\end{array}
\right.
\end{eqnarray*} 
Hence, since $f^1$ and $f^2$ are Lipschitz with respect $x$, $z$ and in $u$, we have 
\begin{eqnarray*}
|\delta(t)|^2\leq K\int_{-T}^{0}\left(\frac{|\Delta X^{\tau}(t+u)|^2}{|\Delta X^{\tau}(t)|^2}\right)\alpha(du),
\end{eqnarray*}
\begin{eqnarray*}
|\beta(t)|^2\leq K\int_{-T}^{0}\left(\frac{|\Delta Z^{\tau}(t+u)|^2}{|\Delta Z^{\tau}(t)|^2}\right)\alpha(du)
\end{eqnarray*}
and 
\begin{eqnarray*}
|\gamma(t)|^2\leq K\int_{-T}^{0}\left(\frac{\displaystyle \int_{E}|\Delta U^{\tau}(t+u,z)|^2 m(dz)}{\displaystyle\int_{E}|\Delta U^{\tau}(t,z)|^2 m(dz)}\right)\alpha(du).
\end{eqnarray*}
Next, in view of \eqref{TA} and \eqref{TAbis}, for $t\in[0,\sigma]$, there exist a constant $C$ such that $\phi=\delta, \beta, \gamma$,
$$|\phi(t)|\leq C,\;\; a.s.$$ 
On other hand, we have
\begin{eqnarray*}
\Delta X^{\tau}(t)&=& \Delta X^{\tau}(\sigma)+\int_t^{\sigma}\delta(s)\Delta X^{\tau}(s)ds + \int_t^{\sigma}\beta(s)\Delta Z(s)ds\\
&& + \int_t^{\sigma}\int_{E}\gamma(s)\Delta U^{\tau}(t,z)m(dz)ds+\int_{t}^{\sigma}\Delta f(s,X^{\tau,2}_{s},Z^{\tau,2}_{s},U^{\tau,2}_{s}(.))ds\\
&&-\int\Delta Z(s)dW(s) \int_t^{\sigma}\int_{E}\Delta U(s,z)\tilde{N}(ds,dz)
\end{eqnarray*}
and setting $\displaystyle R(t)=\int_{0}^{t}\delta(s)ds$, it follows from Itô's formula applied to $R(s)\Delta X^{\tau}(s)$ between $t$ to $\sigma$ that
\begin{eqnarray*}\label{Comp}
R(t)\Delta X^{\tau}(t) & = & R(\sigma)\Delta X^{\tau}(\sigma)+ \int_{t}^{\sigma} R(s)\beta(s)\Delta Z^{\tau}(s)ds\nonumber\\
&& +\int_{t}^{\sigma}\int_{E}R(s)\gamma(s)\Delta U^{\tau}(t,z)m(dz)ds+\int_{t}^{\sigma} R(s)\Delta f(s,X^{\tau,2}_{s},Z^{\tau,2}_{s},U^{\tau,2}_{s}(.))ds\nonumber\\
&& - \int_{t}^{\sigma} R(s)\Delta Z^{\tau}(s)dW(s)- \int_{t}^{\sigma}\int_{E}R(s)\Delta U^{\tau}(s,z)\tilde{N}(ds,dz).                                                                  
\end{eqnarray*}
Taking into consideration the assumptions on generators and terminal values, we obtain
\begin{eqnarray}\label{Comp}
R(t)\Delta X^{\tau}(t) &\leq &\int_{t}^{\sigma} R(s)\beta(s)\Delta Z^{\tau}(s)ds\nonumber\\
&& +\int_{t}^{\sigma}\int_{E}R(s)\gamma(s)\Delta U^{\tau}(t,z)m(dz)ds\nonumber\\
&& - \int_{t}^{\sigma} R(s)\Delta Z^{\tau}(s)dW(s)-\int_{t}^{\sigma}\int_{E}R(s)\Delta U^{\tau}(s,z)\tilde{N}(ds,dz).                                                                  
\end{eqnarray}

Let denote by $D(t)$ the right hand side of \eqref{Comp} and set $M(t)=\int_{0}^{t}\beta(s)dW(s) + \int_{0}^{t}\int_{E}\gamma(s)\tilde{N}(ds,dz)$. In view of Girsanov theorem, the process $(D(t))_{0\leq t\leq T}$ is a martingale under the probability measure $\Q$ defined by $\Q=\mathcal{E}_{\sigma}(M).\P$, where $\mathcal{E}_{\sigma}(M)$ is called a Doléan-Dade exponential.  Taking conditional expectation with respect to $\mathcal{F}_{t}$ under $\Q$ both sides of \eqref{Comp}, we obtain $R(t)\Delta X^{\tau}(t)\leq 0\; \Q$-a.s., and hence $\P$-a.s. Finally, since the process $(R(t), t\geq 0)$ is non-negative, we have $t\in[0,\sigma],\; X^{\tau,1}(t)\geq X^{\tau,2}(t)\; \P$-a.s.
\end{proof}
\subsection{Properties of dynamic risk measures } 

\subsection{Optimal stopping problem for dynamic risk measures}
Let $T>0$ be a time horizon and $f$ be delayed generator satisfied ({\bf A2}). For each stopping time $\tau$ with values in $[0,T]$ and $(\psi(t))_{t\geq 0}$ a $(\mathcal{F}_{t})_{t\geq 0}$-adapted square integrable stochastic process, we consider the risk of $\psi(\tau)$ at time $t$ defined by
\begin{eqnarray*}
\rho^{\psi,\tau}(t)=-X^{\psi,\tau}(t),\; 0\leq t\leq \tau,
\end{eqnarray*}
 where $X^{\psi,\tau}$ satisfy BSDE \eqref{BSDEjump} with driver $f{\bf 1}_{[0,\tau]}$, terminal condition $\psi(\tau)$ and terminal time $\tau$. The functional $\rho: (\psi,\tau)\mapsto \rho^{\psi,\tau}(.)$ defines then a dynamic risk measure induced by the BSDE \eqref{BSDEjump} with driver $f{\bf 1}_{[0,\tau]}$. Let us now deal with some optimal stopping problem related to the above risk measure. Contrary to the case without delay, there is a real difficulty in setting up the problem for the BSDE with delayed generator. Indeed, since the comparison principle of delayed BSDEs failed at the neighborhood of $0$, we are no longer able to construct the supremum of this risk on $[0,T]$. To work around this difficulty, we need to construct a stochastic interval in which, we can derive a comparison theorem. For a stopping time $\delta$, let also consider $(X^{\psi,\delta},Z^{\psi,\delta})$ the solution of BSDE \eqref{BSDEjump} with driver $f{\bf 1}_{[0,\delta]}$, terminal condition $\psi(\delta)$ and terminal time $\delta$. We consider following stopping times
 \begin{eqnarray*}
\sigma_n=\inf(A_n)\wedge T,
\end{eqnarray*}
where 
\begin{eqnarray*}
	A_n=\left\{
\begin{array}{ll}
t\geq 0, & \displaystyle \inf_{\tau,\delta}(|X^{\psi,\tau}(t)-X^{\psi,\delta}(t)|\vee|Z^{\psi,\tau}(t)-Z^{\psi,\delta}(t)|\vee \int_{E}|U^{\psi,\tau}(t,z)-U^{\psi,\delta}(t,z)|m(dz) \leq \frac{1}{n}\\ \mbox{or}\\
& \displaystyle\inf_{\tau,\delta}(|X^{\psi,\tau}(t)-X^{\psi, \delta}(t)|\vee |Z^{\psi,\tau}(t)-Z^{\psi,\delta}(t)|\vee \int_{E}|U^{\psi,\tau}(t,z)-U^{\psi,\delta}(t,z)|m(dz))\geq n
\end{array}
\right\}
\end{eqnarray*}
and set
 \begin{eqnarray}
 \overline{\sigma}=sup_{n\geq 1}\sigma_n.\label{ST}
 \end{eqnarray}
 For a stopping time $\sigma\leq \overline{\sigma}$, let consider $\mathcal{F}_{\sigma}$-measurable random variable $v(\sigma)$ (unique for the equality in the almost sure sense) defined by
\begin{eqnarray}\label{inf}
 v(\sigma)=ess\inf_{\sigma\leq \tau\leq T}\rho^{\psi,\tau}(\sigma).
\end{eqnarray}
 Since $\rho^{\psi,\tau}=-X^{\psi,\tau}$, we get
\begin{eqnarray}\label{sup}
 v(\sigma)=ess\inf_{\sigma\leq \tau\leq T}(-X^{\psi,\tau}(\sigma))=-ess\sup_{\sigma\leq \tau\leq T}X^{\psi,\tau}(\sigma),
\end{eqnarray}
for each stopping time $\sigma\in [0,\overline{\sigma}]$, which characterize the minimal risk-measure. We then provide an existence result of an $\sigma$-optimal stopping time $\tau^{*}\in [\sigma, T]$, satisfies $v(\sigma)=\rho^{\psi,\tau^{*}}(\sigma)$ a.s. 

In order to characterize minimal risk measure by reflected BSDEs with jump and delayed generators, let's derive first the notion of solution of this type of equations.
 
\begin{definition}
The triple of processes $(Y(t),Z(t),U(t,z),K(t))_{0 \leq t \leq T,z \in E}$ is said to be a solution of the reflected delayed BSDEs with jumps associated to delayed generator $f$, stochastic terminal times $\tau$, terminal value $\xi$ and obstacle process $(S(t))_{t\geq 0}$, if it satisfies the following.
\begin{enumerate}
\item [(i)] $(Y,Z,U,K)\in \mathcal{S}^{2}(\R)\times\mathcal{H}^{2}(\R)\times\mathcal{H}_{m}^{2}(\R)\times\mathcal{K}^{2}(\R)$.
\item [(ii)] 
\begin{eqnarray}                                                                                             Y(t) &=& \xi + \int_{t}^{\tau}f(s,Y_{s},Z_{s},U_{s}(.))ds + K(\tau)- K(t) -\int_{t}^{\tau}Z(s)dW(s)\nonumber \\                                                                                            && -   \int_{t}^{\tau}\int_{E}U(s,z)\tilde{N}(ds,dz), \;\; 0 \leq t \leq \tau
\label{Eq1} 
\end{eqnarray}
\item [(iii)] $Y$ dominates $S$, i.e. $Y(t) \geq S(t),\;\; 0 \leq t \leq \tau$
\item [(iv)] the Skorohod condition holds:  
$\displaystyle \int_{0}^{\tau}(Y(t^-) - S(t^-))dK(t) = 0$ a.s.
\end{enumerate}
\end{definition}
In our definition, the jumping times of process $Y$ is not come only from Poisson process jumps (inaccessible jumps) but also from the jump of the obstacle process $S$ (predictable jumps).
\begin{remark}
Let us point out that condition $(iv)$ is equivalent to : If $K=K^c+K^d$, where $K^{c}$ and $K^{d}$ denote respectively  continuous and discontinuous part of $K$, then $\displaystyle \int_{0}^{\tau}(Y(t)-S(t))dK^c(t)= 0$ a.s. and for every predictable stopping time $\sigma\in[0,T]$, $\displaystyle \Delta Y(\sigma)=Y(\sigma)-Y(\sigma^-)=-(S(\sigma^-)-Y(\sigma))^{+}{\bf 1}_{[Y(\sigma^-)=S(\sigma^{-})]}$. On the other hand, since the jumping times of the Poisson process are inaccessible, for every predictable stopping time $\sigma\in[0,T]$, \newline $\Delta Y(\sigma)=-\Delta K(\sigma)=-(S(\sigma^-)-Y(\sigma))^{+}{\bf 1}_{[Y(\sigma^-)=S(\sigma^{-})]}$
\end{remark}
The following theorem will be state in special context that $\xi=\psi(\tau)$ and $S=\psi$ in order to establish a link between the risk measure associated with the EDSR $(\tau, \psi (\tau), f)$ and the solution of the reflected EDSR associated with $(\tau, \psi (\tau), f,\psi)$.

\begin{theorem}\label{Theo3.1}
Let $\tau$ be a stopping time belonging on $[0,T],\,\{\psi(t), \, 0\leq t\leq T\}$ and $f$ be respectively a terminal time, an rcll process in $\mathcal{S}^{2}(\R)$ and a delayed generator satisfying Assumption $({\bf A3})-({\bf A4})$. Suppose $(Y,Z,U,K)$ be the solution of the reflected BSDE associated to $(\tau,\psi(\tau), f, \psi)$.
\begin{enumerate}
\item [(i)] For each stopping time $\sigma\leq \overline{\sigma}$, we have 
\begin{eqnarray}\label{mini}
v(\sigma)=-Y(\sigma) = -ess \sup_{\tau \in [\sigma,T]}X^{\psi,\tau}(\sigma),  
\end{eqnarray}
where $v(\sigma)$ is defined by \eqref{inf}.
\item [(ii)] For each stopping time $\sigma$ with values on $[0,\overline{\sigma}]$ and each $\varepsilon>0$, let $D^{\varepsilon}_{\sigma}$ be the stopping time defined by
\begin{eqnarray}
D^{\varepsilon}_{\sigma}=\inf\left\{t\in[\sigma,T],\; Y(t)\leq \psi(t)+\varepsilon\right\}.\label{TA}
\end{eqnarray}
We have 
\begin{eqnarray*}
Y(\sigma)\leq X^{\psi,D^{\varepsilon}_{\sigma}}(\sigma)+C\varepsilon\;\; \mbox{a.s.},	
\end{eqnarray*} 
\end{enumerate}  
where $C$ is a constant which only depends on $T$ and the Lipschitz constant $K$. In other words, $D^{\varepsilon}_{\sigma}$ is a $(C\varepsilon)$-optimal stopping time for \eqref{mini}.
\end{theorem}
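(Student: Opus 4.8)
The strategy is to identify $Y(\sigma)$ with the value $ess\sup_{\tau\in[\sigma,T]}X^{\psi,\tau}(\sigma)$ of the optimal stopping problem, following the classical scheme of El Karoui et al.\ but carried out entirely on the stochastic interval $[0,\overline{\sigma}]$ on which the comparison principle of the preceding theorem is available; off this interval the delay destroys comparison, which is exactly why $\overline{\sigma}$ is introduced. Since $\rho^{\psi,\tau}=-X^{\psi,\tau}$, the identity \eqref{mini} is equivalent to $Y(\sigma)=ess\sup_{\tau\in[\sigma,T]}X^{\psi,\tau}(\sigma)$, so the plan is to prove the two inequalities separately and recover the formulation \eqref{inf} at the end.

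For the inequality $Y(\sigma)\ge ess\sup_{\tau}X^{\psi,\tau}(\sigma)$, I would fix a stopping time $\tau\in[\sigma,\overline{\sigma}]$ and read the reflected equation \eqref{Eq1} on $[\sigma,\tau]$. There the terminal value obeys $Y(\tau)\ge\psi(\tau)=X^{\psi,\tau}(\tau)$ because $Y$ dominates the obstacle, while the increasing process contributes $K(\tau)-K(t)\ge0$. Setting $\Delta(t)=Y(t)-X^{\psi,\tau}(t)$ and repeating the linearization-and-Girsanov computation from the proof of the comparison theorem, now carrying the extra nonnegative term $\int R\,dK$, gives $R(t)\Delta(t)\ge0$ and hence $Y(t)\ge X^{\psi,\tau}(t)$ on $[\sigma,\tau]$, in particular $Y(\sigma)\ge X^{\psi,\tau}(\sigma)$ a.s.; taking the essential supremum over $\tau$ yields the claim.

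For part (ii), and thereby the reverse inequality, I would work on $[\sigma,D^{\varepsilon}_{\sigma}]$. By definition of $D^{\varepsilon}_{\sigma}$ one has $Y(t)>\psi(t)+\varepsilon$ for $\sigma\le t<D^{\varepsilon}_{\sigma}$, so passing to left limits $Y(t^-)>\psi(t^-)=S(t^-)$ on this open interval; the Skorohod condition in the split form recalled in the Remark (separating $K^{c}$ and $K^{d}$) then forces $K$ to stay flat, i.e.\ $K(D^{\varepsilon}_{\sigma}{}^-)-K(\sigma)=0$. Consequently $Y$ and $X^{\psi,D^{\varepsilon}_{\sigma}}$ solve the \emph{same} delayed BSDE on $[\sigma,D^{\varepsilon}_{\sigma}]$ driven by $f$, the only discrepancy being the terminal data, for which right-continuity of $Y$ and $\psi$ at the hitting time gives $0\le Y(D^{\varepsilon}_{\sigma})-\psi(D^{\varepsilon}_{\sigma})\le\varepsilon$. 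Inserting this into the a priori (continuous-dependence) estimate for delayed BSDEs on the comparison interval --- the same contraction governed by $9TKe\max(1,T)<1$ that underlies existence and uniqueness --- produces a constant $C=C(T,K)$ with $Y(\sigma)\le X^{\psi,D^{\varepsilon}_{\sigma}}(\sigma)+C\varepsilon$, which is precisely part (ii). Since $X^{\psi,D^{\varepsilon}_{\sigma}}(\sigma)\le ess\sup_{\tau}X^{\psi,\tau}(\sigma)$, letting $\varepsilon\downarrow0$ gives $Y(\sigma)\le ess\sup_{\tau}X^{\psi,\tau}(\sigma)$; combined with the first inequality this proves \eqref{mini}, and \eqref{inf} then follows from $\rho^{\psi,\tau}=-X^{\psi,\tau}$.

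The delicate points are all consequences of the delayed generator. First, neither comparison nor the stability estimate holds globally, so every step must be confined to $[0,\overline{\sigma}]$ where the solution differences are bounded and bounded away from $0$; verifying that $\tau$, $\sigma$ and especially $D^{\varepsilon}_{\sigma}$ remain in this region is the part demanding the most care. Second, because $f(s,Y_s,Z_s,U_s(\cdot))$ depends on the whole past $(Y(s+u))_{-T\le u\le0}$, the estimate controlling $Y(\sigma)-X^{\psi,D^{\varepsilon}_{\sigma}}(\sigma)$ must absorb contributions from values of the solution before $\sigma$; this is where the smallness of $TK$ is essential, and I expect it to be the technical heart of the argument. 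Finally, since the obstacle $\psi$ is only rcll, one must treat the predictable jumps of $Y$ and of $K$ at $D^{\varepsilon}_{\sigma}$ carefully, using the jump description of the Remark, in order to justify both the flatness of $K$ up to $D^{\varepsilon}_{\sigma}$ and the terminal bound $Y(D^{\varepsilon}_{\sigma})-\psi(D^{\varepsilon}_{\sigma})\le\varepsilon$.
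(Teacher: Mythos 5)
Your proposal is correct and follows essentially the same strategy as the paper's proof: the lower bound $Y(\sigma)\ge X^{\psi,\tau}(\sigma)$ comes from the monotonicity of $K$ plus the comparison argument on the stochastic interval $[0,\overline{\sigma}]$, and the upper bound comes from the flatness of $K$ before $D^{\varepsilon}_{\sigma}$, the terminal bound $0\le Y(D^{\varepsilon}_{\sigma})-\psi(D^{\varepsilon}_{\sigma})\le\varepsilon$, and the a priori stability estimate for delayed BSDEs. The only organizational difference is in part (i)'s upper bound: the paper proves it directly with the exact hitting time $D_{\sigma}=D^{0}_{\sigma}$, asserting $K(D_{\sigma})-K(\sigma)=0$ and $Y(D_{\sigma})=\psi(D_{\sigma})$ so that $Y(\sigma)=X^{\psi,D_{\sigma}}(\sigma)$ exactly, and then proves (ii) separately via the chain $Y(\sigma)=X^{Y,D^{\varepsilon}_{\sigma}}(\sigma)\le X^{\psi+\varepsilon,D^{\varepsilon}_{\sigma}}(\sigma)\le X^{\psi,D^{\varepsilon}_{\sigma}}(\sigma)+C\varepsilon$; you instead prove (ii) first and recover (i) by letting $\varepsilon\downarrow 0$. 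Your ordering is slightly more robust for a merely rcll obstacle, since it never requires $Y$ to touch $\psi$ exactly at $D_{\sigma}$ nor $K$ to be flat up to and including that time (points the paper glosses over and which need exactly the jump analysis you cite from the Remark), at the price of not exhibiting an exact optimal stopping time, which the paper in any case defers to the subsequent optimality-criterion theorem.
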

\begin{remark}
Note that Property $(ii)$ implies that for all stopping times $\sigma$ and $\tau$ with values on $[0,\overline{\sigma}]$ and $[0,T]$ respectively such that $\sigma \leq \tau\leq D^{\varepsilon}_{\sigma}$, we have $Y(\sigma)=\mathcal{E}^{f}_{\sigma,\tau}(Y(\tau))$ a.s. In other words, the process $(Y(t),\;  \sigma \leq  t \leq D^{\varepsilon}_{\sigma})$ is an $\mathcal{E}^{f}$-martingale.
\end{remark}

\begin{proof}[Proof of Theorem \ref{Theo3.1}]
Let consider $\sigma$ and $\tau$ two stopping time with values in $[0,T]$ such that $\sigma\leq \tau$. Let consider $(Y,Z,U,K)$ be solution of the reflected BSDE associated to $(\psi(\tau), f, \psi)$. We have 
\begin{eqnarray*}                                                                                         Y(\sigma)& = & \psi(\tau) + \int_{\sigma}^{\tau}f(s,Y_{s},Z_{s},U_{s}(.))ds + K(\tau)- K(\sigma) - \int_{\sigma}^{\tau}Z(s)dW(s)\\                                                                          &&-\int_{\sigma}^{\tau}\int_{E}U(s,z)\tilde{N}(ds,dz)
\end{eqnarray*}
According to reflected BSDEs framework, we know that the process $K$ is non-decreasing, hence $K(\tau)-K(\sigma)\geq 0$. Therefore,
\begin{eqnarray}\label{compari}
Y(\sigma)& \geq & \psi(\tau) + \int_{\sigma}^{\tau}f(s,Y_{s},Z_{s},U_{s}(.))ds  - \int_{\sigma}^{\tau}Z(s)dW(s)                                                                                                    - \int_{\sigma}^{\tau}\int_{E}U(s,z)\tilde{N}(ds,dz).
\end{eqnarray}
Let $(\bar{Y},\bar{Z},\bar{U})$ satisfy equation
\begin{eqnarray}
\bar{Y}(\sigma)&=&\psi(\tau) + \int_{\sigma}^{\tau}f(s,\bar{Y}_{s},\bar{Z}_{s},\bar{U}_{s}(.))ds  - \int_{\sigma}^{\tau}\bar{Z}(s)dW(s)                                                                                                    - \int_{\sigma}^{\tau}\int_{E}\bar{U}(s,z)\tilde{N}(ds,dz).	
\end{eqnarray}
It follows from \eqref{compari} that $Y(\sigma)\geq\bar{Y}(\sigma)$. On other hand, thanks to uniqueness of solution for BSDE \eqref{BSDEjump}, we obtain $\bar{Y}=X^{\psi,\tau}$ which implies $Y(\sigma)\geq X^{\psi,\tau}(\sigma)$ for all $\tau\in[\sigma, T]$. Finally we get 
  
\begin{eqnarray}\label{compari1}
Y(\sigma)&\geq & ess\sup_{\tau\in[\sigma,T]}X^{\psi,\tau}(\sigma).
\end{eqnarray}
Let us show now the reversed inequality. In view of it definition, $D^{\varepsilon}_{\sigma}$ belongs in $[\sigma,T]$ and for each $t\in[\sigma(\omega),D_{\sigma}(\omega)[$ for almost all $\omega\in \Omega$, we have $Y(t)>\psi(t)$ a.s. Therefore, recalling reflected BSDEs framework, the function $t\mapsto K(t)$ is almost surely constant on $[\sigma(\omega),D_{\sigma}(\omega)]$ so that $K(D_{\sigma})-K(\sigma)=0$. This implies that
\begin{eqnarray*}
Y(\sigma)& = & \psi(D_{\sigma}) + \int_{\sigma}^{D_{\sigma}}f(s,Y_{s},Z_{s},U_{s}(.))ds  - \int_{\sigma}^{D_{\sigma}}Z(s)dW(s)-\int_{\sigma}^{D_{\sigma}}\int_{E}U(s,z)\tilde{N}(ds,dz).
\end{eqnarray*}
Using again comparison principle, we derive that $Y(\sigma)=X^{\psi,D_{\sigma}}(\sigma)$ which leads
\begin{eqnarray}\label{compari2}
Y(\sigma)&\leq & ess \sup_{\tau \in [\sigma, T]}X^{\psi,\tau}(\sigma)
\end{eqnarray}
According to \eqref{compari1} and \eqref{compari2}, we prove $(i)$. We will prove now $(ii)$. According to \eqref{TA} and comparison theorem of BSDE with delayed generator, we get that for all stopping times $\sigma\leq \bar{\sigma}$,
\begin{eqnarray}
Y(\sigma)=X^{Y,D_{\sigma}^{\varepsilon}}(\sigma)\leq X^{\psi+\varepsilon,D^{\varepsilon}_{\sigma}}(\sigma)\;\;\;\;\;\mbox{as}.\label{ZA}
\end{eqnarray}
On the other hand, using some appropriate estimate on BSDE with delayed generator, we derive
\begin{eqnarray*}
|X^{Y,D_{\sigma}^{\varepsilon}}(\sigma)-X^{\psi+\varepsilon,D^{\varepsilon}_{\sigma}}(\sigma)|^2\leq e^{\beta(T-S)}\varepsilon^2, \;\;\;\;\;\mbox{as},
\end{eqnarray*}
where $\beta$ is a constant depending only on the time horizon $T$ and a Lipschitz constant $K$. Finally, in view of \eqref{ZA} we get the result.
\end{proof}
To end this subsection let now derive an optimality criterium for the optimal stopping time problem based on the strict comparison theorem. Before let us give what we mean by an optimal stopping time. 
\begin{definition}
A stopping time $\bar{\tau}\in [\sigma,T]$ is an $\sigma$-optimal stopping time if  
\begin{eqnarray*}
Y(\sigma) = ess \sup_{\tau \in [\sigma,T]}X^{\psi,\tau}(\sigma)= X^{\psi, \bar{\tau}}(\sigma). 
\end{eqnarray*}
On the other word, the process $(Y(t))_{\sigma\leq t \leq \bar{\tau}}$ is the solution of the non reflected BSDE associated with terminal time $\bar{\tau}$ and terminal value $\psi(\bar{\tau})$.
\end{definition}
\begin{theorem}
Let a rcll process $(\psi(t))_{t\geq 0}$ be l.u.s.c along stopping times and belong to $\mathcal{S}^{2}(\R)$. We assume  $({\bf A1})$-$({\bf A4})$ holds and suppose $(Y,Z,U(.),K)$ is a solution of the reflected BSDE with jump and delayed \eqref{Eq1}. Setting for all stopping time $\sigma\leq \overline{\sigma}$ ($\overline{\sigma}$ is the same defined by \eqref{ST}), the following stopping times:
\begin{eqnarray}\label{ST1}
	\tilde{\tau}_{\sigma} = \lim_{\epsilon \downarrow 0}\uparrow \tau_{\sigma}^{\epsilon},
\end{eqnarray}
where $\tau_{\sigma}^{\varepsilon}=\inf\{\sigma\leq t\leq T,\;\; Y(t)\leq \psi(t)+\varepsilon\}$,
\begin{eqnarray}\label{ST2}
\tau_{\sigma}^{*}=\inf\{\sigma \leq t \leq T,\; Y(t) = \psi(t) \},
\end{eqnarray}
and 
\begin{eqnarray}\label{ST3}
\widetilde{\tau}_{\sigma}=\inf \left\{\sigma \leq t \leq T,\; K(t)- K(\sigma) > 0 \right\}.	
\end{eqnarray} 
Then $\overline{\tau}_{\sigma},\, \tau_{\sigma}^{*}$ and $\widetilde{\tau}_{\sigma}$ are $\sigma$-stopping times of the optimal problem \eqref{inf} such that
\begin{itemize}
\item[(i)] $\overline{\tau}_{\sigma}\leq \tau^{*}_{\sigma}$ and we have $Y(s)=X^{\psi,\tau_{\sigma}^{*}}(s)$ for all $\sigma\leq s\leq \tau_{\sigma}^{*}$\;\; a.s. 
\item [(ii)] $\overline{\tau}_{\sigma}$ is the minimal $\sigma$-stopping time
\item [(iii)] $\widetilde{\tau}_{\sigma}$ is the maximal $\sigma$-stopping time.
\item [(iv)] Moreover if in $({\bf A3})(iv)$, we have $|\theta^{x_t,z_t,u_t(.),u'_t(.)}|>-1$, then $\tau^{*}_{\sigma}=\overline{\tau}_{\sigma}$.
\end{itemize} 
\end{theorem}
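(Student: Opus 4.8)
The plan is to adapt the reflected-BSDE optimal-stopping scheme of El~Karoui et al.\ and Quenez--Sulem to the delayed setting, working systematically on the stochastic interval $[0,\overline{\sigma}]$ on which the comparison theorem of Section~2 is available. The three ingredients I would keep in hand are: the value identity $Y(\sigma)=\mathrm{ess}\!\sup_{\tau\in[\sigma,T]}X^{\psi,\tau}(\sigma)$ from Theorem~\ref{Theo3.1}(i); the $(C\varepsilon)$-optimality $Y(\sigma)\le X^{\psi,\tau^{\varepsilon}_{\sigma}}(\sigma)+C\varepsilon$ from Theorem~\ref{Theo3.1}(ii); and the Skorohod condition, which makes $Y$ an $\mathcal{E}^{f}$-supermartingale that reduces to an $\mathcal{E}^{f}$-martingale exactly on the intervals where $K$ is flat. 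A preliminary step records the ordering $\overline{\tau}_{\sigma}\le\tau^{*}_{\sigma}\le\widetilde{\tau}_{\sigma}$: the first inequality holds because $\{Y=\psi\}\subseteq\{Y\le\psi+\varepsilon\}$ forces $\tau^{\varepsilon}_{\sigma}\le\tau^{*}_{\sigma}$ for every $\varepsilon>0$, and the second because the Skorohod condition keeps $K$ constant until $Y$ first touches $\psi$.

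For~(i) I would note that on $[\sigma,\tau^{*}_{\sigma})$ one has $Y(t)>\psi(t)$ (since $Y\ge\psi$ while $Y\neq\psi$ there), so $K$ is constant on $[\sigma,\tau^{*}_{\sigma})$; the left upper semicontinuity of $\psi$ along stopping times then yields $Y(\tau^{*}_{\sigma})=\psi(\tau^{*}_{\sigma})$, after which $(Y,Z,U)$ solves on $[\sigma,\tau^{*}_{\sigma}]$ the non-reflected delayed BSDE with terminal datum $\psi(\tau^{*}_{\sigma})$, and uniqueness gives $Y(s)=X^{\psi,\tau^{*}_{\sigma}}(s)$. The optimality of $\overline{\tau}_{\sigma}$ in~(ii) follows by letting $\varepsilon\downarrow0$ in the $(C\varepsilon)$-optimality: since $\tau^{\varepsilon}_{\sigma}\uparrow\overline{\tau}_{\sigma}$ and the solution map of the delayed BSDE is continuous in the terminal time (by the a~priori estimates of Section~2), $X^{\psi,\tau^{\varepsilon}_{\sigma}}(\sigma)\to X^{\psi,\overline{\tau}_{\sigma}}(\sigma)$, whence $Y(\sigma)=X^{\psi,\overline{\tau}_{\sigma}}(\sigma)$. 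The same flatness argument shows $\widetilde{\tau}_{\sigma}$ is optimal, using that $K$ is constant on $[\sigma,\widetilde{\tau}_{\sigma})$ and $Y=\psi$ at the first increase of $K$.

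The heart of the matter is the linearised representation obtained exactly as in the comparison proof above. For an arbitrary stopping time $\nu\in[\sigma,T]$, linearising $f$ along $Y$ and $X^{\psi,\nu}$ produces bounded coefficients $\beta,\gamma$, a nonnegative Dol\'eans--Dade exponential $R$ (the very weight appearing in that proof) and an equivalent measure $\Q$ under which
\begin{eqnarray*}
Y(\sigma)-X^{\psi,\nu}(\sigma)=\E^{\Q}\!\left[R(\nu)\big(Y(\nu)-\psi(\nu)\big)+\int_{\sigma}^{\nu}R(s)\,dK(s)\,\Big|\,\mathcal{F}_{\sigma}\right].
\end{eqnarray*}
Both summands are nonnegative since $R\ge0$, $Y\ge\psi$ and $K$ is nondecreasing, so optimality of $\nu$ (i.e.\ $Y(\sigma)=X^{\psi,\nu}(\sigma)$) forces $\int_{\sigma}^{\nu}R\,dK=0$ and $R(\nu)(Y(\nu)-\psi(\nu))=0$. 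Maximality~(iii) should follow from the first identity, which prevents $K$ from increasing before $\nu$ and hence gives $\nu\le\widetilde{\tau}_{\sigma}$; minimality~(ii) I would attack by contradiction on $\{\nu<\overline{\tau}_{\sigma}\}$, where $\nu<\tau^{\varepsilon}_{\sigma}$ for small $\varepsilon$ yields $Y(\nu)>\psi(\nu)+\varepsilon$ and therefore, from the second identity, $R(\nu)=0$.

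This is precisely where I expect the main obstacle, and it is governed by the sign of the jump coefficient. Under $(\mathbf{A4})$ with $\theta\ge-1$ the exponential only satisfies $R\ge0$ and may vanish at a jump, so $R(\nu)=0$ is not yet contradictory and the flatness conclusion is not yet forced; the strengthened hypothesis $\theta>-1$ of part~(iv) delivers the strict comparison $R>0$, which upgrades $R(\nu)(Y(\nu)-\psi(\nu))=0$ to $Y(\nu)=\psi(\nu)$ and $\int_{\sigma}^{\nu}R\,dK=0$ to flatness of $K$. Consequently, under $\theta>-1$ every optimal $\nu$ obeys $\tau^{*}_{\sigma}\le\nu\le\widetilde{\tau}_{\sigma}$; combined with $\overline{\tau}_{\sigma}\le\tau^{*}_{\sigma}$ and the optimality of $\overline{\tau}_{\sigma}$ this collapses the lower endpoints to $\tau^{*}_{\sigma}=\overline{\tau}_{\sigma}$, giving~(iv) and simultaneously the sharp minimality~(ii) and maximality~(iii). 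Throughout, the delay confines each comparison step to $[0,\overline{\sigma}]$, and the predictable jumps of the rcll obstacle at the boundary times are handled by the left upper semicontinuity of $\psi$ along stopping times; supplying a self-contained treatment of~(ii)--(iii) in the merely non-strict regime $\theta\ge-1$, without invoking the strict positivity of $R$, is the remaining delicate point.
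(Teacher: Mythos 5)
Your proposal is correct and follows essentially the route the paper itself takes: the paper gives no self-contained proof of this theorem but simply refers the reader to Theorem 3.7 of Quenez--Sulem \cite{10}, and your argument is precisely that scheme (the value identity from Theorem \ref{Theo3.1}, the $(C\varepsilon)$-optimality, flatness of $K$ via the Skorohod condition, and the linearized representation with the Dol\'eans--Dade weight $R$) transplanted to the delayed setting on the stochastic interval $[0,\overline{\sigma}]$. The delicate point you flag at the end is real, but it is a defect of the theorem's statement rather than of your proof: in \cite{10} the minimality/maximality assertions corresponding to (ii)--(iii) are proved under the strict comparison hypothesis ($\theta>-1$, guaranteeing $R>0$), which the present statement relegates to item (iv) only, so your identification of exactly where strict positivity of $R$ is needed matches the cited source.
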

Since the proof follows the same argument used in its  proof and to avoid unnecessarily lengthening the writing, we will refer the reader to the proof of Theorem 3.7 appear to \cite{10}.

\section{Reflected BSDEs with jumps and time-delayed generator}
This section is devoted to study in general framework of the reflected BSDEs with jumps, right continuous and left limit (rcll) obstacle process and delayed generator.  More precisely, for a fixed $T>0$ and a stopping time $\tau$ in value on $[0,T]$, we consider
\begin{eqnarray}
Y(t)&=&\xi+ \int_{t}^{\tau}f(s,Y_{s},Z_{s},U_{s}(.))ds + K(\tau)- K(t) -\int_{t}^{\tau}Z(s)dW(s)\nonumber \\                                                                                            && -   \int_{t}^{\tau}\int_{E}U(s,z)\tilde{N}(ds,dz), \;\; 0 \leq t \leq \tau.
\label{RBSDE} 
\end{eqnarray}
We derive an existence and uniqueness result under the following additional hypothesis related to the obstacle process.
 \begin{description} 
\item [$({\bf A5})$] The obstacle process $\{S(t),\;\; 0 \leq t \leq T \}$ is a rcll progressively measurable $\R$-valued process satisfies
\begin{itemize}
\item [$(i)$] $\E \left( \sup_{0 \leq t \leq T}(S^{+}(t))^{2}\right) < +\infty$,
\item [$(ii)$] $\xi\geq S(\tau)$ a.s.
\end{itemize}
\end{description}
To begin with, let us first assume $f$ to be independent of $(y_t, z_t, u_t)\in $, that is, it is a given $(\mathcal{F}_t)_{0\leq t\leq \tau}$-progressively measurable process satisfying that $\E\left(\int^{\tau}_{0}f(t)dt\right)<+\infty$. A solution to the backward reflection problem (BRP, in short) is a triple $(Y , Z , U, K )$ which satisfies $(i), (iii), (iv)$ of the Definition 2.4. and	
\begin{itemize}
\item [(ii')] 
\begin{eqnarray*}
	Y(t)&=&\xi+ \int_{t}^{\tau}f(s)ds + K(\tau)- K(t) -\int_{t}^{\tau}Z(s)dW(s)-\int_{t}^{\tau}\int_{E}U(s,z)\tilde{N}(ds,dz), \;\; 0 \leq t \leq \tau.
\end{eqnarray*}
\end{itemize}
The following proposition is from Hamadène and Ouknine  \cite{E5} (Theorem 1.2.a and 1.4.a) or Essaky \cite{Essaky}.
\begin{proposition}
The reflected BSDE with jump associated with $(\xi, g, S)$ has a unique solution $(Y,Z,K,U)$.
\end{proposition}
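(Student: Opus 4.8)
The plan is to represent the first component of the solution as a Snell envelope and then recover the remaining three components through the Doob--Meyer decomposition together with the martingale representation theorem for the Brownian--Poisson filtration $\mathbb{F}$. Since in this proposition the generator $f$ is a given process not depending on $(Y,Z,U)$, the equation is genuinely linear, so the analysis of $Y$ decouples from that of $(Z,U,K)$. First I would introduce the reward process
$$\eta(\theta)=\int_{0}^{\theta}f(s)\,ds+S(\theta)\mathbf{1}_{\{\theta<\tau\}}+\xi\,\mathbf{1}_{\{\theta=\tau\}},\qquad \theta\in[0,\tau],$$
and define $R(t)=ess\sup_{\theta\in\mathcal{T}_{t,\tau}}\E\!\left[\eta(\theta)\mid\mathcal{F}_{t}\right]$, where $\mathcal{T}_{t,\tau}$ is the set of stopping times with values in $[t,\tau]$. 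Assumption $({\bf A5})$ together with the integrability $\E(\int_{0}^{\tau}f(t)\,dt)<+\infty$ guarantees that $\eta$ is square-integrable and of class $[D]$, so that $R$ is a well-defined rcll supermartingale, in fact the smallest rcll supermartingale dominating $\eta$.

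Next I would set the candidate $Y(t):=R(t)-\int_{0}^{t}f(s)\,ds$. By the Doob--Meyer decomposition one writes $R(t)=R(0)+M(t)-K(t)$, with $M$ a square-integrable martingale, $M(0)=0$, and $K$ predictable, nondecreasing and rcll with $K(0)=0$ and $\E|K(\tau)|^{2}<+\infty$; hence $K\in\mathcal{K}^{2}(\R)$. The predictable representation property of the filtration generated by $W$ and $\tilde{N}$ then yields $Z\in\mathcal{H}^{2}(\R)$ and $U\in\mathcal{H}^{2}_{m}(\R)$ such that $M(t)=\int_{0}^{t}Z(s)\,dW(s)+\int_{0}^{t}\int_{E}U(s,z)\tilde{N}(ds,dz)$. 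Substituting back and writing the decomposition between $t$ and $\tau$ produces exactly the dynamics $(ii')$, while the domination $R\geq\eta$ translates into $Y(t)\geq S(t)$ on $[0,\tau]$, giving $(i)$ and $(iii)$ of Definition~2.4. The required square-integrability of $Y$ follows from the class $[D]$ property and standard a priori estimates.

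The delicate step is the Skorohod condition $(iv)$, where the distinction emphasised in the Remark between the inaccessible jumps of $Y$ coming from $\tilde{N}$ and the predictable jumps inherited from the obstacle $S$ must be handled carefully. Here I would exploit the minimality characterisation of the Snell envelope: along its continuous part $K^{c}$ can increase only on $\{Y=S\}=\{R=\eta\}$, which yields $\int_{0}^{\tau}(Y(t)-S(t))\,dK^{c}(t)=0$; at a predictable time $\sigma$ the envelope can only jump downward to the obstacle level, giving $\Delta Y(\sigma)=-(S(\sigma^-)-Y(\sigma))^{+}\mathbf{1}_{\{Y(\sigma^-)=S(\sigma^-)\}}$, which is precisely the jump form of the Skorohod condition recorded in the Remark. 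This is the main obstacle, since it is where the jump structure genuinely enters. Finally, for uniqueness one observes that any solution $(Y,Z,U,K)$ forces $R:=Y+\int_{0}^{\cdot}f$ to be a supermartingale dominating $\eta$ that coincides with a martingale on every stochastic interval where $Y>S$; the standard identification then shows $R$ must equal the Snell envelope, after which subtracting two solutions and applying It\^o's formula to the square of the difference, using the orthogonality of $dW$, $\tilde{N}$ and $dK$, forces $Z$, $U$ and $K$ to agree as well.
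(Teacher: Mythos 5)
Your proposal is correct, and it is essentially the approach the paper itself relies on: the paper gives no proof of this proposition but refers to Hamad\`ene--Ouknine (Theorem 1.2.a and 1.4.a) and to Essaky, where existence and uniqueness are established precisely by your route --- Snell envelope of the reward process $\eta$, Doob--Meyer decomposition to produce the increasing process $K$, predictable representation in the Brownian--Poisson filtration to produce $(Z,U)$, minimality of the envelope to obtain the Skorohod condition, and identification of any solution's $Y+\int_0^{\cdot}f(s)\,ds$ with the Snell envelope for uniqueness. The only point to flag is that square-integrability of your reward $\eta$ (hence $Y\in\mathcal{S}^2(\R)$) requires $f\in\mathcal{H}^2(\R)$ rather than the paper's stated condition $\E\bigl(\int_0^\tau f(t)\,dt\bigr)<+\infty$; this is a defect of the paper's hypothesis, not of your argument.
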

 
\begin{theorem} \label{Theo 2.2}
Assume $({\bf A1})$-$({\bf A3})$ and $({\bf A5})$ hold. For a sufficiently small time horizon $T$ or for a sufficiently small Lipschitz constant $K$ of the generator $f$ i.e 
\begin{eqnarray}
KTe\max\{1,T\}< 1, \label{C1}
\end{eqnarray}
the reflected BSDE with jumps and delayed generator \eqref{Eq1} admits a unique solution $(Y,Z,U,K)\in\mathcal{S}^{2}(\R)\times\mathcal{H}^{2}(\R)\times\mathcal{H}_{m}^{2}(\R)\times \mathcal{K}^{2}(\R)$.   
\end{theorem}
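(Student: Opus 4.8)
The plan is to build the solution by a Picard fixed point argument that reduces the delayed reflected equation \eqref{Eq1} to the non-delayed reflected BSDE whose well-posedness is supplied by the preceding Proposition. First I would work on the complete product space $\mathcal{B}^2 := \mathcal{S}^2(\R)\times\mathcal{H}^2(\R)\times\mathcal{H}^2_m(\R)$, equipped with the weighted norm $\|(y,z,u)\|^2_\beta := \|y\|^2_{\mathcal{S}^2,\beta}+\|z\|^2_{\mathcal{H}^2,\beta}+\|u\|^2_{\beta,m}$ for a parameter $\beta>0$ to be fixed later. On $\mathcal{B}^2$ I define a map $\Phi$ as follows: given $(y,z,u)\in\mathcal{B}^2$, freeze the driver by setting $g(s):=f(s,y_s,z_s,u_s(.))$, which no longer depends on the unknown, and let $\Phi(y,z,u):=(Y,Z,U)$ be the first three components of the unique solution $(Y,Z,U,K)$ to the reflected BSDE with driver $g$, terminal value $\xi$ and obstacle $S$ furnished by the Proposition. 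A fixed point of $\Phi$ is then exactly a solution of \eqref{Eq1}.

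Before invoking the Proposition I must check that $g$ is an admissible driver, namely $\E\int_0^\tau |g(s)|^2\,ds<+\infty$. Splitting $g(s)=[f(s,y_s,z_s,u_s(.))-f(s,0,0,0)]+f(s,0,0,0)$, the delayed Lipschitz bound $({\bf A3})(i)$ controls the first term by $K\int_{-T}^0[|y(s+v)|^2+|z(s+v)|^2+\int_E|u(s+v,\zeta)|^2 m(d\zeta)]\alpha(dv)$, which after taking expectations and applying Fubini is finite since $(y,z,u)\in\mathcal{B}^2$, while the second term is square integrable by $({\bf A3})(ii)$. Together with $({\bf A5})(ii)$, i.e. $\xi\geq S(\tau)$, this makes the reflected problem for the frozen $g$ well posed, so $\Phi$ maps $\mathcal{B}^2$ into itself.

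The core of the proof is the contraction estimate. I take two inputs $(y^i,z^i,u^i)$, $i=1,2$, write $(Y^i,Z^i,U^i,K^i)=\Phi(y^i,z^i,u^i)$ together with the associated $K^i$, and denote the differences by $\Delta Y=Y^1-Y^2$, $\Delta Z$, $\Delta U$, $\Delta K$, $\Delta g$. Applying It\^o's formula to $e^{\beta s}|\Delta Y(s)|^2$ on $[t,\tau]$ and taking expectation removes the $dW$ and $\tilde N$ martingale integrals and produces the jump contribution $\E\int e^{\beta s}\int_E|\Delta U(s,z)|^2 m(dz)\,ds$; the crucial reflection cross-term $\int(\Delta Y(s^-))\,d(\Delta K)(s)$ is shown to be non-positive by combining the two Skorohod conditions with $Y^i\geq S$, exactly as in the classical reflected setting. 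Once the weight $\beta$ absorbs the $|\Delta Y|^2$ term, this leaves the $\beta$-norm of $\Phi(y^1,\cdot)-\Phi(y^2,\cdot)$ controlled by the expected integral of $e^{\beta s}|\Delta g(s)|\,|\Delta Y(s)|$, which I dominate via Young's inequality and the delayed Lipschitz bound on $\Delta g$.

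The main obstacle is the delay estimate, i.e. controlling $\E\int_0^\tau e^{\beta s}\int_{-T}^0\big[|\Delta y(s+v)|^2+|\Delta z(s+v)|^2+\int_E|\Delta u(s+v,\zeta)|^2 m(d\zeta)\big]\alpha(dv)\,ds$ in terms of $\|(\Delta y,\Delta z,\Delta u)\|^2_\beta$. Using Fubini, the change of variable $r=s+v$, the facts that $\alpha$ is a probability measure and that the integrand vanishes for negative times by $({\bf A3})(iii)$, and the elementary inequality $e^{\beta s}\le e^{\beta T}e^{\beta r}$ for $v\in[-T,0]$, this delay contribution is bounded by $\max\{1,T\}e^{\beta T}\|(\Delta y,\Delta z,\Delta u)\|^2_\beta$ up to the Lipschitz factor $K$. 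Tracking the constants carefully and choosing $\beta$, the whole computation collapses to a contraction inequality whose constant is controlled by $KTe\max\{1,T\}$. Under the hypothesis \eqref{C1} this is strictly less than $1$, so $\Phi$ is a contraction on the complete space $\mathcal{B}^2$; its unique fixed point, together with the $K\in\mathcal{K}^2(\R)$ attached to it by the Proposition, is the unique solution of \eqref{Eq1} in $\mathcal{S}^2(\R)\times\mathcal{H}^2(\R)\times\mathcal{H}^2_m(\R)\times\mathcal{K}^2(\R)$, and uniqueness is immediate from uniqueness of the fixed point.
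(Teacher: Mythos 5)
Your proposal is correct and follows essentially the same route as the paper: freeze the delayed driver, invoke the non-delayed reflected BSDE result of Hamad\`ene--Ouknine/Essaky (the Proposition) to define $\Phi$ on $\mathcal{S}^{2}(\R)\times\mathcal{H}^{2}(\R)\times\mathcal{H}_{m}^{2}(\R)$, and show $\Phi$ is a contraction in the weighted $\beta$-norm via It\^o's formula applied to $e^{\beta s}|\Delta Y(s)|^{2}$, the Skorohod conditions to make the cross term $\int \Delta Y(s^-)\,d\Delta K(s)$ non-positive, Young's inequality, and the Fubini/change-of-variable delay estimate, arriving at the same contraction constant $KTe\max\{1,T\}$ under \eqref{C1}. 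The one minor divergence is that you deduce uniqueness from uniqueness of the fixed point (every solution of \eqref{Eq1} is a fixed point of $\Phi$, and $K$ is then determined pathwise by the equation), whereas the paper runs a separate It\^o/Gronwall argument; your version is equally valid and arguably cleaner.
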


\begin{proof}
Let us begin with the uniqueness result. In this fact, assume $(Y,Z,U,K)$ and $(Y',Z',U',K')$ be two solutions of RBSDE associated to data $(\xi,f,S)$ and set $\overline{\theta}=\theta-\theta'$ for $\theta=Y, Z, U,K$. Applying Itô's formula to the discontinuous semi-martingale $|\overline{Y}|^2$, we have
\begin{eqnarray}\label{i1}
&&|\overline{Y}(t)|^2+\int_t^{T}|\overline{Z}(s)|^2ds+\int_{t}^{T}\int_{\mathcal{E}}|\bar{U}(s,z)|^2 m(dz)ds\nonumber\\
&=&2\int_t^T\overline{Y}(s)(f(s,Y_s,Z_s,U_s(.))-f(s,Y'_s,Z'_s,U'_s(.)))ds+2\int_t^T\overline{Y}(s)d\overline{K}(s)\nonumber\\
&&-2\int_t^T\overline{Y}(s)\overline{Z}(s)dW(s)-2\int_t^T\int_{\mathcal{E}}\overline{Y}(s^-)\overline{U}(s,z)\tilde{N}(ds,dz).
\end{eqnarray}
In view of Skorohod condition $(iv)$, we get
\begin{eqnarray}\label{i2}
\int_t^T\overline{Y}(s)d\overline{K}(s)&=&\int_t^T(Y(s^-)-S(t^-))dK(t)+\int_t^T(S(s^-)-Y'(t^-))dK(t)\nonumber\\
&&+ \int_t^T(Y'(s^-)-S'(t^-))dK'(t)+\int_t^T(S'(s^-)-Y(t^-))dK'(t)\nonumber\\
&\leq & 0.
\end{eqnarray}
Next, since the third and fourth term of \eqref{i1} are $(\mathcal{F}_t)_{t\geq 0}$-martingales together with \eqref{i2}, we have
\begin{eqnarray}\label{i5}
&&\E\left(|\overline{Y}(t)|^2+\int_t^{T}|\overline{Z}(s)|^2ds+\int_{t}^{T}\int_{\mathcal{E}}|\bar{U}(s,z)|^2m(dz)ds\right)\nonumber\\
&=&2\E\left(\int_t^T\overline{Y}(s)(f(s,Y_s,Z_s,U_s(.))-f(s,Y'_s,Z'_s,U'_s(.)))ds\right)\nonumber\\
&\leq & \beta\E\left( \int_t^T|\overline{Y}(s)|^2ds\right)+\frac{1}{\beta}\E\left(\int_t^T|f(s,Y_s,Z_s,U_s(.))-f(s,Y'_s,Z'_s,U'_s(.)))|^2ds\right)
\end{eqnarray}
According to assumptions $({A3})(i)$, change of variable and fubini's theorem, we obtain
\begin{eqnarray}\label{i6}
&& \int_t^T|f(s,Y_s,Z_s,U_s(.))-f(s,Y'_s,Z'_s,U'_s(.)))|^2ds \nonumber\\
&\leq & K\int_t^T\left(\int_{-T}^0\left[|\overline{Y}(s+u)|^{2}+|\overline{Z}(s+u)|^{2}+\int_{\mathcal{E}}|U(s+u,z)|^2 m(dz)\right]\alpha(du)\right)ds\nonumber\\
&\leq & K\int_{-T}^{T}\left[|\overline{Y}(s)|^{2}+|\overline{Z}(s)|^{2}+\int_{\mathcal{E}}|U(s,z)|^2 m(dz)\right]ds.
\end{eqnarray}
Putting the last inequality into \eqref{i5} yields
\begin{eqnarray}\label{i6}
&&\E\left(|\overline{Y}(t)|^2+\int_t^{T}|\overline{Z}(s)|^2ds+\int_{t}^{T}\int_{\mathcal{E}}|\bar{U}(s,z)|^2 m(dz)ds\right)\nonumber\\
&\leq & \left(\beta+\frac{K}{\beta}\right)\E\int_{-T}^T|\overline{Y}(s)|^2ds+\frac{K}{\beta}\E\int_0^T\left(|\overline{Z}(s)|^{2}+\int_{\mathcal{E}}|U(s,z)|^2 m(dz)\right)ds.
\end{eqnarray}
If we choose $\beta$ such that $\frac{K}{\beta}\leq 1$, inequality $\eqref{i6}$ becomes
\begin{eqnarray}\label{i7}
&&\E\left(|\overline{Y}(t)|^2+\int_0^{T}|\overline{Z}(s)|^2ds+\int_{0}^{T}\int_{\mathcal{E}}|\bar{U}(s,z)|^2 m(dz)ds\right)\nonumber\\
&\leq & C\E\int_{-T}^T|\overline{Y}(s)|^2ds.
\end{eqnarray}
According the above estimate, using Gronwall's lemma and in view of the right continuity of the process $\overline{Y}$, we have $Y=Y'$. Therefore $(Y,Z,U,K) = (Y',Z',U',K')$, whence reflected BSDE with jump and delayed generator \eqref{RBSDE} admit a uniqueness solution.

It remains to show the existence which will be obtained via a fixed point method. For this let consider $\mathcal{D}=\mathcal{S}^{2}(\R)\times \mathcal{H}^{2}(\R)\times \mathcal{H}_m(\R)$ endowed with the norm $\|(Y,Z,U)\|_{\beta}$ defined by
\begin{eqnarray*}
\|(Y,Z,U)\|_{\beta}=\E\left(\sup_{0\leq t\leq \tau}e^{\beta t}|Y(t)|^2+\int_0^{\tau}e^{\beta t}\left(|Z(t)|^2+\int_{\mathcal{E}}U(s,z)m(dz)\right)ds\right).
\end{eqnarray*}
We now consider a mapping $\Phi:\mathcal{D}$ into itself defined by $\Phi((Y,Z,U))=(\tilde{Y},\tilde{Z},\tilde{U})$ which means that there is a process $\tilde{K}$ such as  $(\tilde{Y},\tilde{Z},\tilde{U},\tilde{K})$ solve the reflected BSDE with jump associated to the data $\xi, f(t,Y,Z,U)$ and $S$. More precisely, $(\tilde{Y},\tilde{Z},\tilde{U},\tilde{K})$ satisfies $(i),\,(iii),\, (iv)$ of Definition 2.4 such that
\begin{eqnarray*}
\tilde{Y}(t)=\xi+\int_t^{\tau}f(s,Y_s,Z_s,U_s(.))ds+\tilde{K}(\tau)-\tilde{K}(t)-\int^{\tau}_t\tilde{Z}(s)dW(s)-\int_t^{\tau}\int_{E}\tilde{U}(s,z)\tilde{N}(ds,dz).
\end{eqnarray*}
For another process $(Y',Z',U')$ belonging in $\mathcal{D}$ let set $\Phi(Y',Z',U')=(\tilde{Y}',\tilde{Z}',\tilde{U}')$. In the sequel and for a generic process $\theta$, we denote $\delta\theta=\theta-\theta'$. Next, applying Ito's formula to $e^{\beta t}|\Delta\tilde{Y}(t)|^{2}$ yields
\begin{eqnarray*}
&&e^{\beta t}|\delta\tilde{Y}(t)|^2+ \beta\int_{t}^{T}e^{\beta s}|\delta \tilde{Y}(s)|^2ds+\int_{t}^{T}e^{\beta s}|\delta \tilde{Z}(s)|^2ds\\
&&+ \int_{t}^{T}e^{\beta s}\int_{\mathcal{E}}|\delta \tilde{U}(s,z)|^2m(dz)ds +\sum_{t\leq s\leq T}e^{\beta s}(\Delta_s(\delta \tilde{Y})-\Delta_s(\delta \tilde{Y}'))^2\\
&=&  2\int_{t}^{T}e^{\beta s}\delta\tilde{Y}(s)(f(s,Y_s,Z_s,U_s(.))-f(s,Y'_s,Z'_s,U_s(.)))ds+2\int_{t}^{T}e^{\beta s}\delta\tilde{Y}(s)d\delta\tilde{K}(s)\\
 &&+M(T)-M(t),
\end{eqnarray*}
where $(M(t))_{0\leq t\leq T}$ is a martingale. On the other hand, in view of uniqueness proof and young inequality, we have respectively $\displaystyle \int_{t}^{T}e^{\beta s}\delta\tilde{Y}(s)d\delta\tilde{K}(s)\leq 0$ and
\begin{eqnarray*}
&& e^{\beta s}\delta\tilde{Y}(s)(f(s,Y_s,Z_s,U_s(.))-f(s,Y'_s,Z'_s,U_s(.)))ds\\
&\leq & \beta	e^{\beta t}|\delta\bar{Y}(s)|^2+\frac{1}{\beta}|f(s,Y_s,Z_s,U_s(.))-f(s,Y'_s,Z'_s,U_s(.))|^2,
\end{eqnarray*}
which allow us to get
\begin{eqnarray}\label{J1}
&&e^{\beta t}|\delta\tilde{Y}(t)|^2+\int_{t}^{T}e^{\beta s}|\delta \tilde{Z}(s)|^2ds+\int_{t}^{T}e^{\beta s}\int_{\mathcal{E}}|\delta \tilde{U}(s,z)|^2m(dz)ds +\sum_{t\leq s\leq T}e^{\beta s}(\Delta_s(\delta \tilde{Y})-\Delta_s(\delta \tilde{Y}'))^2\nonumber\\
&\leq &
\frac{1}{\beta}\int_{0}^{T}e^{\beta s}|f(s,Y_s,Z_s,U_s(.))-f(s,Y'_s,Z'_s,U_s(.))|^2ds+M(T)-M(t).
\end{eqnarray}
Then taking the conditional expectation with respect $(\mathcal{F}_t)_{t\geq 0}$ in both side of the previous inequality, we obtain  
\begin{eqnarray*}\label{j2}
e^{\beta t}|\delta\bar{Y}(t)|^2 
&\leq & \frac{1}{\beta}\E\left(\int_{0}^{T}e^{\beta s}|f(s,Y_s,Z_s,U_s(.))-f(s,Y'_s,Z'_s,U_s(.))|^2 ds|\mathcal{F}_t\right), 
\end{eqnarray*}
which together with Doob inequality yields
\begin{eqnarray}\label{J2}
\E\left(\sup_{0\leq t\leq T}e^{\beta t}|\delta\bar{Y}(t)|^2 \right)
&\leq & \frac{1}{\beta}\E\left(\int_{0}^{T}e^{\beta s}|f(s,Y_s,Z_s,U_s(.))-f(s,Y'_s,Z'_s,U_s(.))|^2 ds\right). 
\end{eqnarray}
Taking expectation in both side of \eqref{J1} for $t=0$, it follows from \eqref{J2} that
\begin{eqnarray}\label{J3}
&&\E\left(\sup_{0\leq t\leq T}e^{\beta t}|\delta\bar{Y}(t)|^2+ \int_{t}^{T}e^{\beta s}|\delta \tilde{Z}(s)|^2ds+\int_{t}^{T}e^{\beta s}\int_{\mathcal{E}}|\delta \tilde{U}(s,z)|^2m(dz)ds\right)\nonumber\\
&\leq & \frac{1}{\beta}\E\left(\int_{0}^{T}e^{\beta s}|f(s,Y_s,Z_s,U_s(.))-f(s,Y'_s,Z'_s,U_s(.))|^2 ds\right). 
\end{eqnarray}
Let us now derive the estimation of right side of inequality \eqref{J3}. In view of assumption $({\bf A1})$, we have
\begin{eqnarray*}
&&\int_{0}^{T}e^{\beta s}|f(s,Y_s,Z_s,U_s(.))-f(s,Y'_s,Z'_s,U_s(.))|^2 ds\\
&\leq& K\int_0^T\int_{-T}^{0}e^{\beta s}\left(|\delta Y(s+u)|^2+|\delta Z(s+u)|^2+\int_{\mathcal{E}}|\delta U(s+u,z)|^2m(dz)\right)\alpha(du)ds.
\end{eqnarray*}
Next, since $Z(t) = 0, U(t,.) \equiv 0$ and  $Y(t)=Y(0)$, for $t<0$, we get respectively with Fubini's theorem, changing the variables that 
\begin{eqnarray}\label{J4}
&&\int_{0}^{T}e^{\beta s}|f(s,Y_s,Z_s,U_s(.))-f(s,Y'_s,Z'_s,U_s(.))|^2 ds\nonumber\\
&\leq & K\max(1,T)e^{\beta T}\left(\sup_{0\leq t\leq T}e^{\beta t}|\delta Y(t)|^2+\int_0^T e^{\beta s}\left(|\delta Z(s)|^2+\int_{\mathcal{E}}|\delta U(s,z)|^2m(dz)\right)ds\right).\nonumber\\
\end{eqnarray}
Thereafter, it follows from \eqref{J3}, \eqref{J4} and $\beta=\frac{1}{T}$ that 
\begin{eqnarray*}
&&\E\left[\sup \limits_{0 \leq t \leq T} e^{\beta t}\vert \delta\tilde{Y}(t)\vert^{2}+\int_{0}^{T}e^{\beta t}\vert \delta\tilde{Z}(t)\vert^{2}dt+\int_{0}^{T}\int_{\mathcal{E}} e^{\beta t}\vert \delta\tilde{U}(t, z)\vert^{2}m(dz)dt \right]\\
&\leq &KTe\max(1,T)\E\left(\sup_{0\leq t\leq T}e^{\beta t}|\delta Y(t)|^2+\int_0^T e^{\beta s}\left(|\delta Z(s)|^2+\int_{\mathcal{E}}|\delta U(s,z)|^2m(dz)\right)ds\right),
\end{eqnarray*}
which mean that 
\begin{eqnarray*}
\|\Phi(Y,Z,U)-\Phi(Y',Z',U')\|_{\beta}\leq KTe\max(1,T)\|(\delta Y,\delta Z,\delta U)\|_{\beta}.
\end{eqnarray*}
For a sufficiently small $T$ or $K$, i.e, $KTe\max(1,T)<1$, the function $\Phi$ is a contraction. Consequently $\Phi$ admits a unique fixed point $(Y,Z,U)$ i.e $(Y,Z,U)=\Phi((Y,Z,U)$ and there is a nondecreasing process $K$ such that $(Y,Z,U,K)$ is solution of the RBSDE \eqref{Eq1}.
\end{proof}
\section{Comparison principle for reflected BSDEs with jumps and delayed generator and optimization problem}
\subsection{Comparison principle for reflected BSDEs with jumps and delayed generator}
In this subsection we give a comparison principle to the reflected BSDEs with jumps and delayed generator. The proof is simple and based on the characterization of solutions of reflected BSDEs with jumps and delayed generator established in Theorem \ref{Theo 2.2} and the comparison theorem for non reflected BSDEs with jumps and delayed generator. Therefore, unlike without delay, result is valid only in a random interval $[0, \bar{\sigma}]$, with $\overline{\sigma}$ defined by \eqref {ST}. Let $(Y^i,Z^i,U^i,K^i)$ is a unique solution of reflected BSDE with jump and delayed generator associated to $(\tau,\psi^i,f^i),\;\; i=1,2$. 
\begin{theorem}\label{CP}
Let $\psi,\; \psi'$ and $f^1$, $f^2$ be respectively two rcll obstacle processes and two Lipschitz drivers satisfying ({\bf A3})-({\bf A5}). Suppose
\begin{itemize}
\item [(i)] $\psi^1(t)\leq \psi^2(t)$, a.s. for all $t\in[0,\tau]$
\item [(ii)] for all $t\in [0,\tau],\; f^1(t,Y^1_t,Z^1_t,U^1_t)\leq f^2(t,Y^1_t,Z^1_t,U^1_t)$, a.s. or $f^1(t,Y^2_t,Z^2_t,U^2_t)\leq f^2(t,Y^2_t,Z^2_t,U^2_t)$.
\end{itemize}
Then there exists a stopping times $\overline{\sigma}$ (defined by \eqref{ST}) such that 
\begin{eqnarray*}
	Y^1(t)\leq Y^2(t), \; a.s.\;\; t\in [0,\overline{\sigma}].
\end{eqnarray*}
\end{theorem}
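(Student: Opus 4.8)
The plan is to reduce the claim to the comparison principle for the non-reflected equations, using the optimal-stopping representation of the reflected solution established in Theorem \ref{Theo3.1}. Applying that theorem separately to the data $(\tau,\psi^i(\tau),f^i,\psi^i)$ for $i=1,2$, one obtains, for every stopping time $\sigma\leq\overline{\sigma}$,
\begin{eqnarray*}
Y^i(\sigma)=ess\sup_{\tau\in[\sigma,T]}X^{\psi^i,\tau}(\sigma),\qquad i=1,2,
\end{eqnarray*}
where $X^{\psi^i,\tau}$ denotes the first component of the solution of the non-reflected BSDE \eqref{BSDEjump} with generator $f^i$, terminal time $\tau$ and terminal value $\psi^i(\tau)$. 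Hence it suffices to compare, stopping time by stopping time, the non-reflected solutions and then pass to the essential supremum.

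First I would fix an arbitrary stopping time $\tau\in[\sigma,T]$ and invoke the comparison theorem for non-reflected BSDEs with jumps and delayed generator proved in Subsection~2.1. Its hypotheses hold with the natural identification of the dominating solution: the terminal ordering $\psi^1(\tau)\leq\psi^2(\tau)$ comes from assumption $(i)$, and the ordering of the drivers along one of the two solutions is exactly assumption $(ii)$. Consequently that theorem yields $X^{\psi^1,\tau}(t)\leq X^{\psi^2,\tau}(t)$ $\P$-a.s. on the stochastic interval where the result is valid, in particular at $t=\sigma$.

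Having the inequality for each fixed $\tau$, I would then take the essential supremum over $\tau\in[\sigma,T]$ on both sides. Since $X^{\psi^1,\tau}(\sigma)\leq X^{\psi^2,\tau}(\sigma)$ for every admissible $\tau$, monotonicity of the essential supremum gives
\begin{eqnarray*}
Y^1(\sigma)=ess\sup_{\tau\in[\sigma,T]}X^{\psi^1,\tau}(\sigma)\leq ess\sup_{\tau\in[\sigma,T]}X^{\psi^2,\tau}(\sigma)=Y^2(\sigma)
\end{eqnarray*}
for every stopping time $\sigma\leq\overline{\sigma}$. Finally, since $Y^1$ and $Y^2$ are rcll, the inequality at all stopping times upgrades to $Y^1(t)\leq Y^2(t)$ $\P$-a.s. simultaneously for all $t\in[0,\overline{\sigma}]$, by applying the bound along a countable family of stopping times exhausting $[0,\overline{\sigma}]$ together with right-continuity (equivalently, the optional section theorem).

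The delicate point is the interval of validity. The non-reflected comparison theorem produces, for each terminal time $\tau$, a random horizon up to which the ordering holds, and this horizon a priori depends on $\tau$ through the difference of the two solutions; to pass to the essential supremum one needs a \emph{common} horizon. This is precisely why the uniform stopping time $\overline{\sigma}$ in \eqref{ST} is built with an infimum over all competing stopping times, and I would verify that $\overline{\sigma}$ is dominated by the per-$\tau$ horizons, so that on $[0,\overline{\sigma}]$ every comparison $X^{\psi^1,\tau}\leq X^{\psi^2,\tau}$ holds at once. This uniformity, combined with the failure of the comparison principle for delayed generators near the origin, is exactly what confines the conclusion to the random interval $[0,\overline{\sigma}]$ rather than the whole of $[0,T]$.
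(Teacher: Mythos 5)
Your proposal is correct and takes essentially the same route as the paper's own proof: both reduce the claim via the representation $Y^i(\sigma)=ess\sup_{\tau\in[\sigma,T]}X^{\psi^i,\tau}(\sigma)$ from Theorem \ref{Theo3.1}, apply the comparison theorem for non-reflected BSDEs with jumps and delayed generator (Theorem 2.3) for each fixed $\tau$ on the random interval $[0,\overline{\sigma}]$, and conclude by monotonicity of the essential supremum. Your extra remarks on passing from stopping times to all $t\in[0,\overline{\sigma}]$ by right-continuity, and on $\overline{\sigma}$ serving as a common horizon for the per-$\tau$ comparisons, merely make explicit what the paper leaves implicit.
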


\begin{proof}
Let denote by $X^{\psi^i,\tau}$ the unique solution of BSDE associated with $(\tau,\psi^i, f^i)$ for $i=1, 2$. In view of Theorem 2.3, we have
\begin{eqnarray*}
X^{\psi^1,\tau}(t)\leq X^{\psi^2,\tau}(t),\;\;\; \mbox{a.s}.,
\end{eqnarray*}
for a fix $t\in [0,\overline{\sigma}\wedge \tau]$. 
Next, taking the essential supremum over all stopping times $\tau$ to values in $[t,T]$, it follows from Theorem \ref{Theo3.1} that
\begin{eqnarray*}
Y^1(t)=\sup_{\tau\in [t,T]} X^{\psi^1,\tau}(t)\leq \sup_{\tau\in [t,T]}X^{\psi^2,\tau}(t)=Y^2(t),\;\;\;\; \mbox{a.s}.
\end{eqnarray*}
\end{proof}
\subsection{Optimization problem for reflected BSDEs with jump and delayed generator}
This subsection is devoted to establish an optimization problem with the help of above comparison principle. For $\mathcal{A}$ a subset of $\R$, let $\{f^{\delta},\;\; \delta \in \mathcal{A}\}$ be a family of $\R$-valued function defined on $\Omega\times [0,T]\times L^{\infty}_{T}(\R)\times L^{2}_{T}(\R)\times L^{2}_{T,m}(\R)$. We consider $(Y^{\delta},Z^{\delta},U^{\delta}(.))$ a family of solution of reflected BSDEs associated to $(\psi,f^{\delta})$. For a appropriated stopping time $\sigma$ belong in $[0,T]$, let solve the following optimisation problem:
\begin{eqnarray}
v(\sigma)=ess\inf \limits_{\delta \in \mathcal{A}}Y^{\delta}(\sigma).	\label{OP}
\end{eqnarray}
For all $(t,y,z,k)\in[0,T]\times L^{\infty}_{T}(\R)\times L^{2}_{T}(\R)\times L^{2}_{T,m}(\R)$, let set 
\begin{eqnarray*}
f(t,y,z,k)=ess\inf_{\delta\in \mathcal{A}}f^{\delta}(t,y,z,k),\;\;\;\;\;\P\,\mbox{-a.s}.
\end{eqnarray*}

Optimisation problem \eqref{OP} will be treat in two context.

First, we suppose that $f$ is one of generators indexed by $\delta\in\mathcal{A}$, i.e there exists $\overline{\delta}\in\mathcal{A}$ such that for all $(t,y,z,k)\in[0,T]\times L^{\infty}_{T}(\R)\times L^{2}_{T}(\R)\times L^{2}_{T,m}(\R)$,
\begin{eqnarray}\label{mini}
f(t,y,z,k)=f^{\overline{\delta}}(t,y,z,k)\;\; \P\;\mbox{- a.s}.
\end{eqnarray}
Next, we suppose that $f$ does not belong to the above family.

We derive the following two results.
\begin{proposition}\label{Popt}
Assume $({\bf A1})$-$({\bf A4})$ and \eqref{mini}.
 Then, there exists a stopping time $\widehat{\sigma}$ defined by
 \begin{eqnarray}\label{Compopti}
 \widehat{\sigma}=ess\inf_{\delta\in \mathcal{A}}\sigma_{\delta},	
 \end{eqnarray}
where $\sigma_{\delta}$ is defined as in \eqref{ST}, such that for $\sigma\leq \widehat{\sigma}$
 \begin{eqnarray*}
 Y(\sigma) = ess\inf_{\delta \in \mathcal{A}}Y^{\delta}(\sigma)\;\;\;\; a.s,
 \end{eqnarray*}
 where $(Y,Z,U(.))$ is the unique solution of the reflected BSDE associated to $(\psi,f)$.
\end{proposition}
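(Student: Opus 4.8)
The plan is to establish the two inequalities $Y(\sigma) \leq Y^{\delta}(\sigma)$ for every $\delta \in \mathcal{A}$ and $Y(\sigma) \geq \operatorname{ess\,inf}_{\delta} Y^{\delta}(\sigma)$ separately, on the stochastic interval $[0,\widehat{\sigma}]$ where comparison is available. For the first inequality, I would fix an arbitrary $\delta \in \mathcal{A}$ and compare the reflected BSDE with generator $f$ against the one with generator $f^{\delta}$. Since $f(t,y,z,k) = \operatorname{ess\,inf}_{\delta' \in \mathcal{A}} f^{\delta'}(t,y,z,k) \leq f^{\delta}(t,y,z,k)$ pointwise, and since both equations share the same obstacle $\psi$, the comparison principle for reflected BSDEs with jumps and delayed generator (Theorem~\ref{CP}) applies on $[0, \sigma_{\delta} \wedge \sigma]$. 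Provided $\sigma \leq \widehat{\sigma} = \operatorname{ess\,inf}_{\delta \in \mathcal{A}} \sigma_{\delta} \leq \sigma_{\delta}$, I obtain $Y(\sigma) \leq Y^{\delta}(\sigma)$ a.s. Taking the essential infimum over $\delta \in \mathcal{A}$ then yields
\begin{eqnarray*}
Y(\sigma) \leq \operatorname{ess\,inf}_{\delta \in \mathcal{A}} Y^{\delta}(\sigma), \qquad \text{a.s.}
\end{eqnarray*}

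For the reverse inequality I would invoke the structural assumption \eqref{mini}: there exists $\overline{\delta} \in \mathcal{A}$ with $f = f^{\overline{\delta}}$ pointwise on the relevant space. By uniqueness of the solution to the reflected BSDE associated to $(\psi, f)$ (Theorem~\ref{Theo 2.2}), the solution $(Y,Z,U,K)$ coincides with the solution $(Y^{\overline{\delta}}, Z^{\overline{\delta}}, U^{\overline{\delta}}, K^{\overline{\delta}})$ driven by $f^{\overline{\delta}}$; in particular $Y(\sigma) = Y^{\overline{\delta}}(\sigma)$ a.s. Since $\overline{\delta}$ is one admissible index, $Y^{\overline{\delta}}(\sigma) \geq \operatorname{ess\,inf}_{\delta \in \mathcal{A}} Y^{\delta}(\sigma)$, which gives the matching bound. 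Combining the two inequalities produces $Y(\sigma) = \operatorname{ess\,inf}_{\delta \in \mathcal{A}} Y^{\delta}(\sigma)$ a.s. for $\sigma \leq \widehat{\sigma}$, as claimed.

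The main obstacle I anticipate is not either inequality in isolation but the handling of the stochastic interval on which comparison is legitimate. The delayed-generator comparison principle fails near $0$ and holds only up to a stopping time such as $\sigma_{\delta}$ built from the separation of the $Z$ and $U$ components away from degeneracy; consequently each pairwise comparison $Y \leq Y^{\delta}$ is valid only on $[0,\sigma_{\delta}]$. To take the essential infimum over a possibly uncountable family $\mathcal{A}$ and still land on a single interval, one must verify that $\widehat{\sigma} = \operatorname{ess\,inf}_{\delta} \sigma_{\delta}$ is itself a bona fide stopping time and that $\widehat{\sigma} \leq \sigma_{\delta}$ for each $\delta$, so that the comparison is simultaneously valid. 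The essential-infimum construction guarantees existence of a countable minimizing sequence $(\delta_n)$ with $\widehat{\sigma} = \inf_n \sigma_{\delta_n}$ a.s., which is measurable; care is then needed to ensure the pointwise inequality $f \leq f^{\delta}$ holds $\P \otimes \lambda$-a.e. uniformly enough to feed into Theorem~\ref{CP}. I would address this by fixing $\delta$ first, working on $[0,\sigma]$ with $\sigma \leq \widehat{\sigma}$, and only afterwards passing to the essential infimum, so that the delicate measurability of $\widehat{\sigma}$ is used solely to certify that the common interval is nonempty and adapted.
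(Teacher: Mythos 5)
Your proof is correct, and it is essentially the paper's argument in slightly more modular packaging. The only real difference lies in how you obtain the inequality $Y(\sigma)\leq Y^{\delta}(\sigma)$: you invoke the reflected comparison principle (Theorem~\ref{CP}) directly, using $f=ess\inf_{\delta'}f^{\delta'}\leq f^{\delta}$ and the common obstacle $\psi$, whereas the paper re-derives that comparison inline --- it compares the non-reflected solutions $X^{\psi,\tau}$ and $X^{\delta,\psi,\tau}$ via the comparison theorem for (non-reflected) delayed BSDEs with jumps, takes the essential supremum over $\tau\in[\sigma,T]$, and then uses the representation $Y(\sigma)=ess\sup_{\tau\in[\sigma,T]}X^{\psi,\tau}(\sigma)$ from Theorem~\ref{Theo3.1}. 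Since Theorem~\ref{CP} is itself proved in the paper by exactly that unrolled argument, the two routes coincide mathematically; yours buys brevity by citing the lemma, the paper's buys self-containedness (and makes visible where the restriction to $[0,\widehat{\sigma}]$ enters). Your reverse inequality --- uniqueness for the reflected BSDE together with $f=f^{\overline{\delta}}$ forcing $Y=Y^{\overline{\delta}}$, hence $Y(\sigma)\geq ess\inf_{\delta}Y^{\delta}(\sigma)$ --- is identical to the paper's. One further point in your favor: your discussion of why $\widehat{\sigma}=ess\inf_{\delta\in\mathcal{A}}\sigma_{\delta}$ is a legitimate stopping time dominated by each $\sigma_{\delta}$ (via a countable minimizing subfamily) addresses a measurability issue that the paper passes over in silence.
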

\begin{proof}  
For each $\delta \in \mathcal{A}\;$ and each $\tau\in [\widehat{\sigma},T]$, the comparison theorem for delays BSDEs with jumps yields that for a stopping time $\sigma\leq\widehat{\sigma},\; X^{\psi,\tau}(\sigma) \leq  X^{\delta,\psi,\tau}(\sigma)$. The essential supremum taken over $\tau $ on the both side of the above inequality, we get 
\begin{eqnarray}
ess \sup_{\sigma\leq \tau\leq T}X^{\psi,\tau}(\sigma)\leq ess \sup_{\sigma\leq \tau \leq T}X^{\delta,\psi,\tau}(\sigma).\label{Sup}
\end{eqnarray}
According to the representation \eqref{mini}, it follows from \eqref{Sup} that $Y(\sigma) \leq Y^{\alpha}(\sigma)$ for each $\alpha \in \mathcal{A}$ and each $\sigma\in [0,\widehat{\sigma}]$. This implies by the essential infimum taken over $\delta$ in both side of the previous inequality that for all stopping time $\sigma\in [0,\widehat{\sigma}]$ that
\begin{eqnarray}
	Y(\sigma)\leq ess \inf_{\delta\in \mathcal{A}} Y^{\delta}(\sigma).\label{IZ1}
\end{eqnarray}
On the other, since there exists $\overline{\delta}\in\mathcal{A}$ such that $f=f^{\overline{\delta}}$, in view of uniqueness of reflected BSDE associated to $(f,\psi,\tau)$, we obtain $Y=Y^{\overline{\delta}}$. Therefore 
\begin{eqnarray*}
Y(\sigma) \geq ess \inf_{\delta \in \mathcal{A}} Y^{\delta}(\sigma)
\end{eqnarray*}
which together with \eqref{IZ1} yields for all stopping time $\sigma\leq \widehat{\sigma}$,
\begin{eqnarray*}
Y(\sigma) = ess \inf_{\delta \in \mathcal{A}} Y^{\delta}(\sigma).
\end{eqnarray*}
\end{proof}
\begin{proposition}\label{Poptbis}
Assume $({\bf A1})$-$({\bf A4})$ and suppose that $f\notin\{f^{\delta}, \delta\in \mathcal{A}\}$.
 Then, there exists a stopping time $\widehat{\sigma}$ defined as in Proposition \ref{Popt} such that for $\sigma\leq \widehat{\sigma}$
 \begin{eqnarray*}
 Y(\sigma) = ess\inf_{\delta \in \mathcal{A}}Y^{\delta}(\sigma)\;\;\;\; a.s,
 \end{eqnarray*}
 where $(Y,Z,U(.))$ is the unique solution of the reflected BSDE associated to $(\psi,f)$.
\end{proposition}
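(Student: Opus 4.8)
The plan is to prove Proposition \ref{Poptbis} by reducing it, through an approximation argument, to the situation already handled in Proposition \ref{Popt}. The essential difficulty is that here the infimum generator $f=ess\inf_{\delta\in\mathcal{A}}f^{\delta}$ need \emph{not} coincide with any member of the family $\{f^{\delta},\,\delta\in\mathcal{A}\}$, so the clean equality $Y=Y^{\overline{\delta}}$ used in the previous proof is unavailable. Only one inequality survives for free: since $f\leq f^{\delta}$ pointwise for every $\delta$, the comparison principle of Theorem \ref{CP} (valid on the random interval $[0,\widehat{\sigma}]$, with $\widehat{\sigma}$ defined exactly as in \eqref{Compopti}) gives $Y(\sigma)\leq Y^{\delta}(\sigma)$ a.s.\ for all $\sigma\leq\widehat{\sigma}$ and all $\delta\in\mathcal{A}$, whence
\begin{eqnarray*}
Y(\sigma)\leq ess\inf_{\delta\in\mathcal{A}}Y^{\delta}(\sigma),\qquad \sigma\leq\widehat{\sigma}.
\end{eqnarray*}
So the whole task is to produce the reverse inequality $Y(\sigma)\geq ess\inf_{\delta\in\mathcal{A}}Y^{\delta}(\sigma)$.

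For the reverse inequality I would argue that the essential infimum of the family $\{f^{\delta}\}$ can be approximated from above by members of the family, up to an arbitrarily small penalty. Concretely, first I would verify that the family of drivers is stable under pairwise minimisation, i.e.\ that for $\delta_1,\delta_2\in\mathcal{A}$ one can find $\delta_3\in\mathcal{A}$ with $f^{\delta_3}=\min(f^{\delta_1},f^{\delta_2})$ (a measurable-selection / pasting hypothesis on $\mathcal{A}$, exactly the lattice structure that underlies the definition of $ess\inf$). This downward-directedness guarantees a sequence $(\delta_n)\subset\mathcal{A}$ with $f^{\delta_n}\downarrow f$ a.e. For each fixed $\sigma\leq\widehat{\sigma}$ one then has, by the same comparison argument, $Y^{\delta_n}(\sigma)\geq Y(\sigma)$, and it remains to show $Y^{\delta_n}(\sigma)\to Y(\sigma)$ in, say, $L^2$. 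This is the step I expect to be the main obstacle.

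To control that convergence I would apply the a priori $L^2$-estimate for reflected delayed BSDEs which is implicit in the contraction estimate of Theorem \ref{Theo 2.2}: the difference $Y^{\delta_n}-Y$ is governed by $\|f^{\delta_n}(\cdot,Y_\cdot,Z_\cdot,U_\cdot)-f(\cdot,Y_\cdot,Z_\cdot,U_\cdot)\|^2$, evaluated along the solution $(Y,Z,U)$ of the limiting reflected BSDE. Since $f^{\delta_n}\downarrow f$ pointwise and the smallness condition \eqref{C1} makes the governing map a strict contraction, I would bound $\E\big(\sup_{t}e^{\beta t}|Y^{\delta_n}(t)-Y(t)|^2\big)$ by a constant multiple of $\E\int_0^{\tau}e^{\beta s}|f^{\delta_n}(s,Y_s,Z_s,U_s(.))-f(s,Y_s,Z_s,U_s(.))|^2\,ds$, and then invoke dominated convergence (the dominating function coming from the fixed reference driver $f^{\delta_1}$ together with assumption $({\bf A3})(ii)$) to send the right-hand side to $0$. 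Passing to the limit yields $Y(\sigma)\geq \lim_n Y^{\delta_n}(\sigma)\geq ess\inf_{\delta}Y^{\delta}(\sigma)$, which combined with the first inequality gives the claimed equality on $[0,\widehat{\sigma}]$. Throughout, every comparison step is confined to the random interval $[0,\widehat{\sigma}]$, which is precisely why $\widehat{\sigma}$ is taken as the essential infimum $ess\inf_{\delta}\sigma_{\delta}$: on this interval all the strategy processes $Z^{\delta}$ stay bounded away from the degeneracy at $0$, so the delayed comparison principle of Theorem \ref{CP} applies uniformly in $\delta$.
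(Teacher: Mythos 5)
Your proposal is sound, but it takes a genuinely different route from the paper in the key step (the reverse inequality). The paper does not use a monotone sequence $f^{\delta_n}\downarrow f$ at all: it asserts that, by definition of the essential infimum, for every $\eta>0$ there is a \emph{single} $\delta^{\eta}\in\mathcal{A}$ with $f^{\delta^{\eta}}-\eta\leq f<f^{\delta^{\eta}}$, then invokes the a priori stability estimate for (non-reflected) delayed BSDEs with jumps (Lemma 2.1 of \cite{D1}) to get $X^{\psi,\tau}(\sigma)+C\eta\geq X^{\delta^{\eta},\psi,\tau}(\sigma)$ uniformly in $\tau$, takes the essential supremum over $\tau$ and uses the representation $Y(\sigma)=ess\sup_{\tau}X^{\psi,\tau}(\sigma)$ of Theorem \ref{Theo3.1} to conclude $Y(\sigma)+C\eta\geq ess\inf_{\delta}Y^{\delta}(\sigma)$, and finally lets $\eta\to 0$. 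You instead stay at the level of the reflected equations: you postulate downward directedness of the family, extract $f^{\delta_n}\downarrow f$ pointwise, and prove $Y^{\delta_n}(\sigma)\to Y(\sigma)$ via an $L^2$-stability estimate extracted from the contraction/uniqueness argument of Theorem \ref{Theo 2.2} plus dominated convergence. Both arguments rest on an unstated structural hypothesis on $\{f^{\delta}\}$: the paper's uniform $\eta$-approximation by one index is strictly stronger than what the definition of $ess\inf$ gives (it is essentially an assumption, as in \cite{10}), whereas your directedness hypothesis is weaker and explicitly named, which is a point in your favour. What the paper's route buys is a clean quantitative bound $C\eta$ that is uniform in $\tau$ and $\sigma$ and avoids any convergence argument for reflected solutions; what your route costs is the extra stability lemma for RBSDEs with different drivers (you should note that the cross term $\int(Y^{\delta_n}-Y)\,d(K^{\delta_n}-K)\leq 0$ precisely because both equations have the \emph{same} obstacle $\psi$, which is what makes your estimate legitimate) and some care in checking that the pointwise infimum of the drivers, evaluated along the limiting solution path, agrees with $f$ evaluated along that path.
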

\begin{proof}
With the same argument as in previous proof, we obtain for all $\sigma\leq \widehat{\sigma}$, 
\begin{eqnarray}
Y(\sigma)\leq ess \inf_{\delta\in \mathcal{A}} Y^{\delta}(\sigma) \;\;\; a.s.\label{IZ2}
\end{eqnarray}
Let us derive the reversed inequality. According to the definition of $f$, we have the following: $\P$-a.s., for all $\eta>0$, there exists $\delta^{\eta}$ such that $f^{\delta^{\eta}}-\eta\leq f<f^{\delta^{\eta}}$. Moreover, applied Lemma 2.1 appear in \cite{D1} to BSDE with jump and delayed generator, we provide that for all stopping time $\sigma\leq \widehat{\sigma}$, there exists constant $C$ depending only to Lipschitz constant and terminal horizon such that    
\begin{eqnarray*}
X^{\psi, \tau}(\sigma) + C\eta \geq X^{\delta^{\eta},\psi,\tau}(\sigma), \;\;\; a.s.
\end{eqnarray*}
Using argument like in the proof above, we obtain
\begin{eqnarray*} \label{C2}
Y(\sigma) + C\eta \geq ess \inf_{\delta \in \mathcal{A}}Y^{\delta}.(\sigma)
\end{eqnarray*}
Since the inequality holds for each $\eta> 0$ then we have:
\begin{eqnarray*}
Y(\sigma) \geq ess \inf_{\delta \in \mathcal{A}}Y^{\delta}(\sigma),
\end{eqnarray*}
which together with \eqref{IZ2} ends the proof.
\end{proof}
\begin{remark}
According Propositions \ref{Popt} and \ref{Poptbis}, we  establish that the value function of optimisation problem \eqref{OP} associated to a family of functions $\{f^{\delta},\; \delta\in \mathcal{A}\}$ is $Y$ the solution of reflected BSDE with jump and delayed generator $f$ defined by $f=ess\inf_{\delta\in\mathcal{A}}f^{\delta}$.
\end{remark}
\section{Robust optimal stopping problem for delayed risk measure}
In this section, we consider the  ambiguous risk-measures modeling by a BSDE with jump that we do not enough concerning the the delayed generator associated. More precisely, we consider $(\rho^{\delta})_{\delta \in \mathcal{A}}$ the family of the risk-measure of the position $\psi(\tau)$ induced by the BSDE with jump associated to delayed generator $f^{\delta}$. Roughly speaking, we have for each $t\in[0,T]$,    
\begin{eqnarray*}
\rho^{\delta,\psi,\tau}(t) = -X^{\delta,\psi, \tau }(t),
\end{eqnarray*}
where $X^{\delta,\psi,\tau}$ is the solution of the BSDE associated with the generator $f^{\delta}$, terminal condition $\psi(\tau)$ and terminal time $\tau$. We are in the context where a very persistent economic agent the worst case. For this reason, we require a risk measure which would be the supremum over $\delta$ of the family of risk measures $(\rho^{\delta,\psi,\tau}(\sigma))_{\delta\in\mathcal{A}}$ defined by
\begin{eqnarray*}
\rho^{\psi,\tau}(\sigma)=ess\sup_{\delta\in \mathcal{A}}\rho^{\delta,\psi,\tau}(\sigma) = ess \sup_{\delta \in \mathcal{A}}(-X^{\delta,\psi,\tau}(\sigma))= - ess \inf_{\delta \in \mathcal{A}}X^{\delta,\psi,\tau}(\sigma)	.
\end{eqnarray*}
Our aim in this section is to find at each stopping time $\sigma\in [0,\widehat{\sigma}]$ ($\widehat{\sigma}$ is a stopping time defined such that we can apply the comparison principe for BSDE with jump and delayed generator), the stopping time  $\bar{\tau}\in [\sigma,T]$ which minimizes $\rho^{\psi,\tau}(\sigma)$ the risk measure of our persistent agent. To resolve this problem, let consider the value function $u$ is defined by: 
\begin{eqnarray}\label{Ambi}
u(\sigma)= ess \inf_{\tau \in [\sigma,T]} ess \sup_{\delta \in \mathcal{A}}\rho^{\delta,\psi,\tau}(\sigma).
\end{eqnarray}
On the other hand, and for a given $\sigma\in [0,\widehat{\sigma}]$, let us consider the two value function:
\begin{eqnarray}\label{M}
\overline{V}(\sigma) = ess \inf_{\delta \in \mathcal{A}} ess \sup_{\tau \in [\sigma,T]}X^{\delta,\psi,\tau}(\sigma)
\end{eqnarray}
and
\begin{eqnarray} \label{N}
\underline{V}(\sigma) = ess \sup_{\tau \in [\sigma,T]} ess \inf_{\delta \in \mathcal{A}}X^{\delta,\psi,\tau}(\sigma).
\end{eqnarray}
\begin{remark}
It not difficult to derive that $\underline{V}(\sigma)= -u(\sigma)$ a.s.
\end{remark}
Let us give this definition which permit us to understand condition of solvability to our problem.
\begin{definition}
Let $\sigma$ be in $[0,\widehat{\sigma}]$. A pair $(\overline{\tau}, \overline{\delta})\in [\sigma,T]\times\mathcal{A}$ is called a $\sigma$-saddle point of our problem \eqref{M} or \eqref{N} if
\begin{itemize}
\item [(i)] $\underline{V}(\sigma) = \overline{V}(\sigma)$ a.s. 
\item [(ii)] the essential infimum in \eqref{M} is attained at $\overline{\delta}$.
\item [(iii)] the essential supremum in \eqref{N} is attained at $\overline{\tau}$. 
\end{itemize}
\end{definition}
\begin{remark} 
\item [(i)] It is not difficult to prove that for each $\sigma\in [0,\widehat{\sigma}]$, $(\overline{\tau},\overline{\delta})$ is a $\sigma$-saddle point if and only if for each $(\tau,\delta)\in [\sigma,T]\times\mathcal{A}$, we have
\begin{eqnarray*}
X^{\overline{\delta},\psi,\tau}(\sigma)\leq X^{\overline{\delta},\psi,\overline{\tau}}(\sigma)\leq X^{\delta,\psi,\overline{\tau}}(\sigma),\;\;\; a.s.	
\end{eqnarray*}
\item [(ii)] For each $\sigma\in [0,\widehat{\sigma}]$, if $(\overline{\delta},\overline{\tau})$ is a $\sigma$-sadle point, then $\overline{\delta}$ and $\overline{\tau}$ attain respectively the infimum and the supremum in $\underline{V}(\sigma)$ that is
\begin{eqnarray*}
\underline{V}(\sigma)=ess\sup_{\tau \in [\sigma,T]} ess\inf_{\delta \in \mathcal{A}}X^{\delta,\psi,\tau}(\sigma)=ess \inf_{\delta \in \mathcal{A}}X^{\delta,\psi,\overline{\tau}}(\sigma)=X^{\overline{\delta},\psi,\overline{\tau}}(\sigma)
\end{eqnarray*}
Hence, $\overline{\tau}$ is an optimal stopping time for the agent who wants to minimize over stopping times her risk-measure at time $\sigma$ under ambiguity (see \eqref{Ambi}).
Also, since $\overline{\delta}$ attains the essential infimum in \eqref{M}, $\overline{\delta}$ corresponds at time $\sigma$ to a worst case scenario. Hence, the robust optimal stopping problem \eqref{Ambi} reduces to a classical optimal stopping problem associated with a worst-case scenario among the possible ambiguity parameters $\delta\in \mathcal{A}$.
\end{remark}
 Since for all $\sigma\in [0,\widehat{\sigma}]$, we have clearly $\underline{V}(\sigma)\leq \overline{V}(\sigma)$ a.s., we want to determine when the equality holds, characterize the value function and address the question of existence of a $\sigma$- saddle point.
 
 For this purpose, let us relate the game problem to the optimization problem for RBSDEs stated previously. Let consider $(Y^{\delta},Z^{\delta},U^{\delta}(.))$ the solution of the reflected BSDE with jump and delayed generator $(\psi(\tau),f^{\delta},\psi)$. According to section 4, there exist a stopping time $\sigma_{\delta}$ defined as in \eqref{ST} such that, for each $\sigma\in [0,\sigma_{\delta}]$, we have 
\begin{eqnarray*}
Y^{\delta}(\sigma) = ess \sup_{\tau \in [\sigma_{\delta},T]} X^{\delta,\psi,\tau}(\sigma),\;\;\;\mbox{a.s}.	
\end{eqnarray*} 
Next, applied comparison theorem to the family of reflected BSDE with jump and delayed generator $(\psi(\tau),f^{\delta},\psi)$, there exists  a stopping time $\widehat{\sigma}$ defined by \eqref{Compopti} such that for each $\sigma \in [0,\widehat{\sigma}]$,
\begin{eqnarray*}
\overline{V}(\mathcal{S}) = ess \inf_{\delta\in \mathcal{A}}Y^{\delta}(\sigma),\;\;\;\; \mbox{a.s}.
\end{eqnarray*}
Let set $f=\inf_{\delta\in \mathcal{A}}$ and consider $(Y,Z,U(.))$ as the solution of the reflected BSDE $(\psi(\tau),f,\psi)$.

\begin{theorem}\label{Topt}
Suppose that $f^{\delta},\;f$ satisfy assumptions $({\bf A3})$ and $({\bf A4})$ for all  $\delta\in\mathcal{A}$. Suppose also that there exist $\overline{\delta}\in \mathcal{A}$ such that $f=f^{\overline{\delta}}$. Then, there exists a value function, which is characterized as the solution of the reflected BSDE $(\psi(\tau),f,\psi)$, that is, for each $\sigma \in[0,\widehat{\sigma}]$, we have
\begin{eqnarray*}
Y(\sigma)=\underline{V}(\sigma)=\overline{V}(\sigma)\;\;\; \mbox{a.s}.
\end{eqnarray*}
Moreover, the minimal risk measure, defined by \eqref{Ambi}, verifies, for each $\sigma\in[0,\widehat{\sigma}]$, $u(\sigma)=-Y(\sigma)$, a.s.
\end{theorem}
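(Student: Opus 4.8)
The plan is to identify the solution $Y$ of the reflected BSDE $(\psi(\tau), f, \psi)$ with the upper value $\overline{V}$, then to exhibit a $\sigma$-saddle point so as to collapse the gap with the lower value $\underline{V}$, and finally to transfer the resulting equality to the minimal risk measure $u$ via the relation $\underline{V} = -u$ already noted. First I would establish $Y(\sigma) = \overline{V}(\sigma)$. Since by hypothesis $f = f^{\overline{\delta}}$, the assumption of Proposition \ref{Popt} is met, so that proposition gives $Y(\sigma) = ess\inf_{\delta \in \mathcal{A}} Y^{\delta}(\sigma)$ a.s. for every $\sigma \leq \widehat{\sigma}$; together with the identity $\overline{V}(\sigma) = ess\inf_{\delta \in \mathcal{A}} Y^{\delta}(\sigma)$ recorded just before the statement, this yields $Y(\sigma) = \overline{V}(\sigma)$.

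Next I would produce an explicit $\sigma$-saddle point $(\overline{\tau}, \overline{\delta})$. Take $\overline{\delta}$ to be the distinguished index with $f = f^{\overline{\delta}} = ess\inf_{\delta} f^{\delta}$, and let $\overline{\tau} = \tau^{*}_{\sigma} = \inf\{\sigma \leq t \leq T : Y(t) = \psi(t)\}$ be the $\sigma$-optimal stopping time furnished by the optimality theorem of Section 2 applied to the single generator $f$, so that $Y(\sigma) = X^{\overline{\delta}, \psi, \overline{\tau}}(\sigma) = ess\sup_{\tau \in [\sigma,T]} X^{\overline{\delta}, \psi, \tau}(\sigma)$. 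Because $f^{\overline{\delta}} \leq f^{\delta}$ for all $\delta$, the comparison theorem for non-reflected BSDEs with jumps and delayed generator (Theorem 2.3, valid on $[0, \widehat{\sigma}]$) gives $X^{\overline{\delta}, \psi, \tau}(\sigma) \leq X^{\delta, \psi, \tau}(\sigma)$ for every $\tau$ and $\delta$; taking $\tau = \overline{\tau}$ shows $\overline{\delta}$ attains $ess\inf_{\delta} X^{\delta, \psi, \overline{\tau}}(\sigma)$, while the optimality of $\overline{\tau}$ shows it attains $ess\sup_{\tau} X^{\overline{\delta}, \psi, \tau}(\sigma)$. By the saddle-point characterization in the Remark following the definition, $(\overline{\tau}, \overline{\delta})$ is then a $\sigma$-saddle point.

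It remains to close the inequalities. The bound $\underline{V}(\sigma) \leq \overline{V}(\sigma)$ holds by definition, and conversely the saddle point gives
$$\overline{V}(\sigma) \leq ess\sup_{\tau} X^{\overline{\delta}, \psi, \tau}(\sigma) = X^{\overline{\delta}, \psi, \overline{\tau}}(\sigma) = ess\inf_{\delta} X^{\delta, \psi, \overline{\tau}}(\sigma) \leq \underline{V}(\sigma),$$
whence $Y(\sigma) = \underline{V}(\sigma) = \overline{V}(\sigma)$ a.s. on $[0, \widehat{\sigma}]$. The Remark stating $\underline{V}(\sigma) = -u(\sigma)$ then immediately delivers $u(\sigma) = -Y(\sigma)$ a.s.

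The step I expect to be the main obstacle is the construction of the saddle point: one must verify that the optimal stopping time $\overline{\tau}$ supplied by the single-generator theory genuinely realizes $Y(\sigma) = X^{\overline{\delta}, \psi, \overline{\tau}}(\sigma)$, and that the comparison theorem may be applied simultaneously for the whole family $\{f^{\delta}\}_{\delta \in \mathcal{A}}$ over the same stochastic horizon — this is precisely why $\widehat{\sigma} = ess\inf_{\delta} \sigma_{\delta}$ is the right uniform cut-off. Because the delay destroys comparison in a neighborhood of $0$, every essential-supremum/infimum manipulation must be confined to $[0, \widehat{\sigma}]$, and one should check that interchanging the order of these operations (together with the measurability of $\overline{\tau}$) is legitimate there.
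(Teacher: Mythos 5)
Your first and last steps are sound: Proposition \ref{Popt} does give $Y(\sigma)=ess\inf_{\delta\in\mathcal{A}}Y^{\delta}(\sigma)=\overline{V}(\sigma)$ a.s.\ for $\sigma\le\widehat{\sigma}$, and the identity $\underline{V}=-u$ transfers the final equality to the risk measure. The genuine gap is in your middle step, the saddle-point construction. You take $\overline{\tau}=\tau^{*}_{\sigma}=\inf\{t\ge\sigma:\;Y(t)=\psi(t)\}$ and assert $Y(\sigma)=X^{\overline{\delta},\psi,\overline{\tau}}(\sigma)$. That assertion is exactly the optimality criterium of Section 2 (Theorem 2.9), whose hypotheses require the obstacle $\psi$ to be l.u.s.c.\ along stopping times; Theorem \ref{Topt} assumes only an rcll obstacle. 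For a merely rcll obstacle an exact optimal stopping time need not exist and $\tau^{*}_{\sigma}$ in particular need not be optimal: take $f\equiv 0$, $T=1$ and the deterministic obstacle $\psi(t)=t$ for $t<1$, $\psi(1)=0$. Then $Y(t)=1$ for $t<1$, so $\tau^{*}_{0}=1$, while $X^{\psi,\tau^{*}_{0}}(0)=\psi(1)=0\neq 1=Y(0)$, and no stopping time attains the supremum. Consistently with this, the paper claims existence of a saddle point only in the Corollary that follows the theorem, where the l.u.s.c.\ hypothesis is explicitly added; you have in effect proved that corollary rather than the theorem. (You flagged this step yourself as the main obstacle, but under the stated hypotheses it cannot be repaired as written.)

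The theorem does not need a saddle point, and the paper's route (Theorem 5.3 of \cite{10}, restricted to $[0,\widehat{\sigma}]$) avoids one. Since $f=f^{\overline{\delta}}\le f^{\delta}$ for every $\delta$, the comparison theorem for non-reflected BSDEs with jumps and delayed generator (Theorem 2.3) gives, for each stopping time $\tau$ and each $\sigma\le\widehat{\sigma}$,
\begin{eqnarray*}
ess\inf_{\delta\in\mathcal{A}}X^{\delta,\psi,\tau}(\sigma)=X^{\overline{\delta},\psi,\tau}(\sigma)\;\;\mbox{a.s.},
\end{eqnarray*}
i.e.\ the infimum over $\delta$ is attained at $\overline{\delta}$ \emph{uniformly in} $\tau$. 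Taking the essential supremum over $\tau\in[\sigma,T]$ and applying Theorem \ref{Theo3.1}$(i)$ to the single generator $f^{\overline{\delta}}$ then yields
\begin{eqnarray*}
\underline{V}(\sigma)=ess\sup_{\tau\in[\sigma,T]}X^{\overline{\delta},\psi,\tau}(\sigma)=Y^{\overline{\delta}}(\sigma)=Y(\sigma)\;\;\mbox{a.s.},
\end{eqnarray*}
the last equality by uniqueness of the reflected BSDE, since $f=f^{\overline{\delta}}$. Combined with $\overline{V}=Y$ from your first step and the trivial bound $\underline{V}\le\overline{V}$, this closes the duality gap with no optimal stopping time at all. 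Alternatively, your argument can be salvaged under the stated hypotheses by replacing $\tau^{*}_{\sigma}$ with the $(C\varepsilon)$-optimal times $D^{\varepsilon}_{\sigma}$ of Theorem \ref{Theo3.1}$(ii)$: one gets $\overline{V}(\sigma)=Y(\sigma)\le X^{\overline{\delta},\psi,D^{\varepsilon}_{\sigma}}(\sigma)+C\varepsilon=ess\inf_{\delta}X^{\delta,\psi,D^{\varepsilon}_{\sigma}}(\sigma)+C\varepsilon\le\underline{V}(\sigma)+C\varepsilon$, and then let $\varepsilon\downarrow 0$.
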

\begin{proof}
The proof follows the same approach as one of Theorem 5.3 appear in \cite{10}. Except the fact that we deal in the stochastic interval $[0,\widehat{\sigma}]$ where $\widehat{\sigma}$ is defined by \eqref{Compopti}. This is due to the use of the comparison theorem which is valid only on this type of interval.
\end{proof}
We have this result which generalize Corollary 5.4 in \cite{10} to BSDE with jump and delayed generator. 
\begin{corollary}
Suppose assumptions of Theorem \ref{Topt} be satisfied and the obstacle $psi$ be l.u.s.c. along stopping times. Let $\widehat{\sigma}$ be a stopping time defined by \eqref{Compopti}. For each $\sigma\in [0,\widehat{\sigma}]$, we set
\begin{eqnarray*}
\tau^{*}=\inf \{s\leq \sigma,\;\; Y(u)=\psi(u)\}.
\end{eqnarray*}
Then, $(\tau^{*}_{\sigma},\overline{\sigma})$ is an $\sigma$-saddle point, that is $Y(\sigma) = X^{\overline{\sigma},Y^{\delta}_{\tau^{*}_{\sigma}}}(\sigma)$ a.s. In other word, $\tau_{\sigma}^{*}$ is an optimal stopping time for the agent who wants to minimize her risk measure at time $\sigma$ and $\overline{\delta}$ corresponds to a worst scenario.
\end{corollary}
Let end this paper with this remark in order to summarize the rest of our generalization.
\begin{remark}
Using the same approach it not difficult to respectively establish the analog of Proposition 5.5, Theorem 5.6 and Corollary 5.7 of \cite{10}.
\end{remark}

\end{document}